\newcommand{\ceil }[1]{\left\lceil #1\right\rceil}
\theoremstyle{definition}
\newtheorem{definition}{Definition}
\theoremstyle{plain}
\newtheorem{lemma}{Lemma}
\theoremstyle{plain}
\newtheorem{theorem}{Theorem}
\begin{document}

\vspace*{5mm}

\normalsize

\noindent
{\textbf{{\LARGE Renyi-Ulam Games and Forbidden Substrings}}}

\thispagestyle{fancyplain} \setlength\partopsep{0pt} \flushbottom
\date{}

\vspace*{5mm}
\noindent

\textsc{Nikolai Beluhov}

\medskip

\begin{center}
\parbox{11,8cm}{\footnotesize
\textbf{Abstract.} The Renyi-Ulam game is played between two players, the Seeker and the Obscurer. The Obscurer thinks of a number between 1 and $n$. The Seeker wishes to identify that number. On each turn, the Seeker asks the Obscurer whether her number belongs to a specific subset of the numbers from 1 to $n$. The Obscurer answers either yes or no, and her answer is either true or false. The series of truths and lies given by the Obscurer must conform to a restriction $R$ that the players have agreed on in advance.

We give criteria on the restrictions $R$ that allow the Seeker to win. Then we apply our results to the study of restrictions characterized by forbidden substrings. In particular, we give a complete classification of all such restrictions characterized by two forbidden substrings, elaborating on Czyzowicz, Lakshmanan and Pelc's classification of all such restrictions characterized by one forbidden substring.}

\end{center}

\baselineskip=0.9\normalbaselineskip

\section{Introduction}\label{intro}

Renyi-Ulam games were first introduced by Alfred Renyi in 1961 in \cite{Renyi}. Stanislaw Ulam rediscovered the game and initiated its popularity in 1976 in \cite{Ulam}.

In its original form, the Renyi-Ulam game is played between two players, a Seeker and an Obscurer, as follows. The Seeker seeks to identify an object, say, a number between 1 and $n$, known to the Obscurer. To this end, he may ask arbitrary yes-no questions. The Obscurer replies, for the most part, accurately -- with the exception of at most one lie. What is the minimum number of questions that the Seeker needs to attain his goal, and what strategy should he follow?

Many variations on this basic theme have been explored in the literature. A thorough survey is \cite{Pelc}.

We focus on Renyi-Ulam games in which the Obscurer's capacity to mislead the Seeker is restricted in a different way but the rules of the game are otherwise unchanged. For instance, the Obscurer could agree to never follow a lie with a truth, as in \cite{CLP}.

In section \ref{ingen} we study whether, given a specific restriction on the Obscurer's inaccuracy, the Seeker can win at all. Then in section \ref{substr} we apply the tools we have built to the family of restrictions characterized by forbidden substrings in the series of truths and lies.

\section{Renyi-Ulam games in general}\label{ingen}

\begin{definition}

A \emph{lie restriction} $R$ is a condition on binary strings.

\end{definition}

A 0 in a binary string stands for a truth and a 1 stands for a lie. We shall only study lie restrictions $R$ such that:

\begin{itemize}
\item $R$ is \emph{prefix-closed}, that is, every prefix of a string that satisfies $R$ also satisfies $R$, and
\item $R$ is \emph{extensible}, that is, every string that satisfies $R$ can be extended by one letter so as to continue to satisfy $R$.
\end{itemize}

This is a natural requirement. As a string that satisfies a restriction $R$ is formed over the course of a game, $R$ needs to be prefix-closed (for, otherwise, the Obscurer has already broken $R$ on a previous move) and extensible (for, otherwise, the Obscurer would have no legal reply should the Seeker continue to ask questions).

Here are some restrictions that have been looked into. See \cite{Pelc} for details.

\begin{itemize}
\item The string $s$ contains at most $c$ lies, where $c$ is a fixed positive integer.
\item Every prefix $p$ of the string $s$ contains at most $c \cdot |p|$ lies, where $c$ is a fixed positive real.
\item Every substring $p$ of the string $s$ such that $|p| = a$ contains at most $b$ lies, where $a$ and $b$ are fixed positive integers.
\item The string $s$ does not contain a substring equal to $p$, where $p$ is a fixed string.
\end{itemize}

\begin{definition}

The \emph{Renyi-Ulam game with lie restriction $R$} is played as follows.

There are two players: the Seeker and the Obscurer. The Obscurer thinks of a number between 1 and $n$. The Seeker wishes to identify that number.

On each turn, the Seeker asks the Obscurer whether her number belongs to a specific subset of the numbers from 1 to $n$. The Obscurer answers either yes or no, and her answer is either true or false. On every turn, the series of truths and lies given by the Obscurer thus far must belong to $R$.

\end{definition}

Consider the Seeker's state of knowledge by the end of turn $i$. For each binary string $s$ of length $i$ that satisfies $R$, the Seeker knows that if the accuracy of the Obscurer's replies thus far is described by $s$, then the Obscurer's number belongs to some subset $A(s)$ of the numbers from 1 to $n$ (where $A(s)$ depends on the particular series of questions asked by the Seeker).

Whenever the Seeker asks whether the Obscurer's number belongs to a set $Q$, each $A(s)$ is split: if the Obscurer answers ``yes'', then $A(s)$ is split into $A(s0) = A(s) \cap Q$ and $A(s1) = A(s) \cap \overline{Q}$, and if she answers ``no'', then $A(s)$ is split into $A(s0) = A(s) \cap \overline{Q}$ and $A(s1) = A(s) \cap Q$. Furthermore, whenever $s0$ or $s1$ does not satisfy $R$, the corresponding subset of possibilities drops out of consideration.

The Seeker's goal is to attain a state of knowledge such that all $A(s)$ but one are empty, the exceptional $A(s)$ containing exactly one element: the Obscurer's number. 

A key observation at this point is that, since the $A(s)$ are disjoint following each turn, all that matters is the number of elements in each $A(s)$ rather than which elements precisely comprise it. Therefore, it makes sense to instead consider $a(s)$, the number of possibilities for the Obscurer's number that each $s$ permits.

On turn 0, before the game begins, $a(\varepsilon) = n$. On each subsequent turn, the Seeker effectively picks two numbers $b(s)$ and $c(s)$ for each $a(s)$ so that $a(s) = b(s) + c(s)$, asks a question whose $Q$ intersects each $A(s)$ by $b(s)$ elements, and arrives at $a(s0) = b(s)$ and $a(s1) = c(s)$ if the Obscurer answers ``yes'' and at $a(s0) = c(s)$ and $a(s1) = b(s)$ if she answers ``no''. Again, whenever $s0$ or $s1$ does not satisfy $R$, the corresponding number of possibilities drops out of consideration. The Seeker's goal is to make all $a(s)$ but one equal to zero, with the exceptional $a(s)$ equal to one (or, equivalently, to bring the sum of all $a(s)$ down to unity).

This reduction makes it natural to group together descriptors $s$ that have identical possible futures. That is, if $s_1$ and $s_2$ are such that, for all strings $s'$, the strings $s_1s'$ and $s_2s'$ either both satisfy $R$, or both fail to satisfy $R$, then there is no need to deal with $a(s_1)$ and $a(s_2)$ separately: they can be replaced with their sum, $a(s_1) + a(s_2)$, and it can henceforth be treated as any other $a(s)$.

This motivates the following series of definitions.

\begin{definition}

A \emph{lie restriction graph} $G$ for a lie restriction $R$ is a directed graph $G$ such that:

\begin{itemize}
\item One vertex in $G$ is called a \emph{start vertex}.
\item Every arc in $G$ is labeled either 0 or 1.
\item Every vertex in $G$ originates at least one and at most two outward arcs. Of those, at most one arc is labeled 0 and at most one arc is labeled 1.
\item A binary string $s$ satisfies $R$ if and only if there is a path $l$ in $G$ such that $l$ starts from the start vertex and the arc labels along $l$ trace out $s$.
\end{itemize}

\end{definition}

Two strings $s_1$ and $s_2$ have identical possible futures iff the paths that they define in $G$ starting from the start vertex end up on the same vertex. A restriction graph captures the notion of identical possible futures in precisely the same way as this is done in automata theory. More precisely, if $A$ is a deterministic automaton that recognizes the language of all strings that satisfy $R$ then removing all non-accept states from $A$ results in a restriction graph for $R$.

Every prefix-closed extensible restriction $R$ possesses at least one restriction graph $G$: the vertices of $G$ are all strings $s$ that satisfy $R$, and an arc labeled $a$ in $G$ always points from a string $s$ to $sa$. Of course, given a prefix-closed extensible restriction $R$, there may be many distinct restriction graphs $G$ for $R$.

Consider, then, a fixed restriction $R$ and a fixed restriction graph $G$ for $R$.

\begin{definition}

A \emph{position} is a mapping from the vertices of $G$ to the nonnegative integers.

\end{definition}

A position describes the Seeker's state of knowledge following a turn. We assume a numbering of the vertices of $G$ (with 1 being the start vertex) and write $(a_1, a_2, \ldots)$ for the position that maps vertex $i$ to the integer $a_i$.

\begin{definition}

The \emph{support} of a position $P$ is the number of vertices $v$ of $G$ such that $P(v)$ is nonzero.

\end{definition}

All positions that we are about to consider are of finite support.

\begin{definition}

The \emph{weight} of a position $P$ is the sum of $P(v)$ over all vertices $v$ of $G$.

\end{definition}

The Seeker's goal is to reach a position of unit weight. 

\begin{definition}

Let $P$, $P_1$, and $P_2$ be three positions. We say that $(P_1, P_2)$ is a \emph{split} of $P$ if there exist two positions $Q_0$ and $Q_1$ such that

\begin{itemize}
\item For all vertices $v$ of $G$, $P(v) = Q_0(v) + Q_1(v)$.
\item For all vertices $v$ of $G$, $P_1(v)$ is the sum of $Q_0(u)$ over all vertices $u$ of $G$ such that a 0-arc in $G$ points from $u$ to $v$.
\item For all vertices $v$ of $G$, $P_2(v)$ is the sum of $Q_1(u)$ over all vertices $u$ of $G$ such that a 1-arc in $G$ points from $u$ to $v$.
\end{itemize}

\end{definition}

A split captures the Seeker's capacity to advance to a different position by asking a question. If the Seeker is currently in a position $P$ and $(P_1, P_2)$ is a split of $P$, then the Seeker can devise a question that, depending on the Obscurer's answer, will advance the game to either $P_1$ or $P_2$.   

\begin{definition}

A \emph{strategy-like tree} is a binary tree such that its vertices are positions and every non-leaf vertex $P$ has two children that form a split of $P$. A strategy-like tree for a position $P$ is a strategy-like tree rooted at $P$.

\end{definition}

\begin{definition}

A \emph{strategy tree} is a strategy-like tree whose every leaf has weight either zero or one. A strategy tree for a position $P$ is a strategy tree rooted at $P$.

\end{definition}

A leaf of zero weight corresponds to a position that cannot occur in a game, and a leaf of unit weight corresponds to a position in which the Seeker has won.

The Seeker wins the Renyi-Ulam game with lie restriction $R$ from a position $P$ iff there is a strategy tree for $P$. Consequently, the Seeker wins the Renyi-Ulam game with lie restriction $R$ for $n$ iff there is a strategy tree for the position $(n, 0, 0, \ldots)$.

We are done setting up the scaffolding and proceed to give two criteria on winning positions.

\begin{theorem}\label{all}

The Seeker wins from all positions iff the Seeker wins from all positions of weight two.

\end{theorem}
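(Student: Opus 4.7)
The forward direction is immediate, since positions of weight two are in particular positions. For the converse I would induct on the weight $w$ of the position $P$. The cases $w = 0$ and $w = 1$ are trivial: the single-node tree consisting of $P$ itself is a strategy tree. The case $w = 2$ is precisely the hypothesis. So suppose $w \ge 3$ and that the Seeker wins from every position of weight at most $w - 1$.

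The key idea is to decompose $P = P' + P''$, where $P'$ is a position of weight $2$ (obtained by setting aside any two ``units'' of $P$, possibly at the same vertex or at two different vertices) and $P''$ has weight $w - 2$. By hypothesis there is a strategy tree $T'$ for $P'$. I would then build a strategy tree $T$ for $P$ by ``augmenting'' $T'$ with a parallel component $\Pi$ that begins life as $\Pi = P''$. At each non-leaf node of $T'$ whose children form a split $(Q_1,Q_2)$ of the node's position $Q$, choose any split $(\Pi_1, \Pi_2)$ of the current $\Pi$-component and set the children of the corresponding node of $T$ to $(Q_1 + \Pi_1,\ Q_2 + \Pi_2)$. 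Because the defining equations of a split are linear in $P$, this is a split of $Q + \Pi$. At every leaf of $T'$ the ``$P'$-part'' has weight at most $1$ while the $\Pi$-part has weight at most $w-2$, so the combined position has weight at most $w - 1$; by the inductive hypothesis, a strategy tree exists from each such position, and grafting these in at the leaves completes $T$.

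The main things to verify, and the only real obstacles, are two structural facts about splits. First, every position admits at least one split; this is where the defining property of a lie restriction graph that every vertex originates at least one outward arc comes in, so each unit of the position can always be assigned to either $Q_0$ or $Q_1$. Second, the weight of a position is non-increasing under splits, i.e.\ $\mathrm{weight}(P_1) + \mathrm{weight}(P_2) \le \mathrm{weight}(P)$, which guarantees that the parallel $\Pi$-component never grows past weight $w-2$ and hence that the leaves of $T$ are indeed of weight at most $w - 1$. Both facts follow directly from the definitions, so once they are in hand the induction goes through cleanly.
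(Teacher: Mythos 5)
Your proof is correct, and it rests on the same engine as the paper's: splits are additive (the defining equations are linear), so strategy trees can be superposed, and one inducts on weight. The decomposition is arranged differently, though. The paper writes $P = P_1 + P_2$ with both summands of strictly smaller weight, takes strategy trees for both by induction, pads them to be isomorphic, and adds them vertex-by-vertex; the leaves of the sum then have weight at most two, and the weight-two hypothesis finishes. You instead peel off a single weight-two piece $P'$, follow its strategy tree while dragging the remainder $P''$ along via arbitrary splits, and land on leaves of weight at most $w-1$, which the strong induction hypothesis handles. Your version buys you freedom from the padding/isomorphism step, at the price of having to check explicitly that every position admits a split and that weight is non-increasing under splits --- both of which you correctly identify and which do follow immediately from the definitions (take $Q_0 = P$, $Q_1 = 0$ for existence; for monotonicity, note that mass at a vertex lacking an outgoing arc of the required label simply vanishes). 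The paper needs those same two facts implicitly when it pads $T_1$ and $T_2$ with extra leaves of weight zero or one, so neither route is really shorter; they are two equally valid bookkeepings of the same superposition idea.
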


\begin{proof}

Suppose that the Seeker wins from all positions of weight two. Then there is a strategy tree for every position of weight two. We will prove that there is a strategy tree for every position by induction on weight.

Let $P$ be a position such that $w(P) \ge 3$. Then either $P$ has a coordinate greater than or equal to two, or $P$ has at least two coordinates greater than or equal to one. In both cases, $P = P_1 + P_2$ for some positions $P_1$ and $P_2$ such that $w(P_1)$ and $w(P_2)$ are both strictly less than $w(P)$.

Let $T_1$ and $T_2$ be strategy trees for $P_1$ and $P_2$. Extend $T_1$ and $T_2$ to two isomorphic strategy trees $T'_1$ and $T'_2$ by adding on sufficiently many leaves of weight zero or one. Furthermore, let $T'$ be isomorphic to both $T'_1$ and $T'_2$ and every vertex of $T'$ be the sum of the corresponding vertices of $T'_1$ and $T'_2$. Then $T'$ is a strategy-like tree for $P$ all of whose leaves have weight at most two. Replace every leaf $L$ of $T'$ with a strategy tree for $L$, and the result is a strategy tree for $P$.

\end{proof}

\begin{theorem}\label{alln}

The Seeker wins from all positions of the form $(n, 0, 0, \ldots)$ iff the Seeker wins from all positions of the form $(0, \ldots, 0, 2, 0, \ldots)$ (unit support, weight two).

\end{theorem}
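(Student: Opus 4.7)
The plan is to prove the two directions of the equivalence separately, using Theorem~\ref{all} in one direction and a direct split construction in the other.

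For the reverse implication---assuming Seeker wins from every unit-support weight-two position, showing she wins from every $(n, 0, \ldots)$---I would invoke Theorem~\ref{all}. That theorem reduces the desired conclusion to winning from every weight-two position. The hypothesis covers the unit-support case; the only remaining weight-two positions have support two and are of the form $1_u + 1_v$ with $u \neq v$. For these I would exhibit the split generated by $Q_0 = 1_u$ and $Q_1 = 1_v$: the child $P_1$ is the 0-image of $Q_0$ and so carries weight at most one (it is the singleton at the 0-successor of $u$ if that arc exists and is zero otherwise), and $P_2$ has weight at most one for the symmetric reason. Both children are leaves, giving a depth-one strategy tree. Combined with the hypothesis this yields strategy trees for every weight-two position, and Theorem~\ref{all} then upgrades them to strategy trees for every position, in particular every $(n, 0, \ldots)$.

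For the forward implication---assuming Seeker wins from every starting position, showing she wins from every unit-support weight-two position---I would build the strategy tree directly. Fixing a vertex $v$, the split generated by $Q_0 = Q_1 = 1_v$ yields $P_1$ equal to $1_{v_0}$ (if the 0-arc from $v$ exists) or $0$, and $P_2$ equal to $1_{v_1}$ or $0$. By extensibility of $R$ at least one of the arcs exists, but in every case both children have weight at most one, so both are leaves, and we obtain a depth-one strategy tree for $(0, \ldots, 2_v, \ldots)$. This construction does not in fact use the hypothesis; the forward implication holds unconditionally, so the real content of the theorem sits in the reverse direction.

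The main observation, modest as it is, is that a single simple split pattern---pushing one unit along the 0-arc and one unit along the 1-arc leaving the relevant vertex---simultaneously handles both the support-two piece needed to feed Theorem~\ref{all} in the reverse direction and the whole of the forward direction. I do not anticipate any serious obstacle beyond spotting this and verifying that a missing 0- or 1-arc contributes the zero position (still a valid leaf) rather than rendering the split ill-defined.
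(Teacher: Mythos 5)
Your argument rests on a misreading of the split construction, and the error propagates to both directions. In the game, each child of $P$ in a strategy tree corresponds to one of the Obscurer's two possible answers, and under either answer \emph{every} unit of $P$ survives into that child, routed along either a 0-arc or a 1-arc (or vanishes only when the required arc is absent): the correct children of the split determined by $Q_0, Q_1$ are $P_1(w) = \sum_{u \to_0 w} Q_0(u) + \sum_{u \to_1 w} Q_1(u)$ and $P_2(w) = \sum_{u \to_0 w} Q_1(u) + \sum_{u \to_1 w} Q_0(u)$. This is what the preceding discussion of $a(s0) = b(s)$, $a(s1) = c(s)$ describes, and it is how splits are used in the proof of Theorem \ref{twostrl}, where the children of $P_{u,v}$ are $P_{a,c}$ and $P_{b,d}$ or $P_{a,d}$ and $P_{b,c}$ --- each again of weight two. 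Consequently your split of $1_u + 1_v$ generated by $Q_0 = 1_u$, $Q_1 = 1_v$ has children $1_{u_0} + 1_{v_1}$ and $1_{u_1} + 1_{v_0}$, generically of weight two, not one; there is no depth-one strategy tree. The same objection kills your forward direction: the split of $2_v$ with $Q_0 = Q_1 = 1_v$ has both children equal to $1_{v_0} + 1_{v_1}$. A reliable sanity check is your own closing remark that the forward implication holds unconditionally: combined with your reverse implication this would say the Seeker wins for every restriction $R$ and every $n$, which would make the entire classification in Section \ref{substr} vacuous.

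The route through Theorem \ref{all} is blocked even in principle, because the intermediate claim you need --- that winning from all unit-support weight-two positions implies winning from all weight-two positions --- is false; the paper exhibits the counterexample graph $\{(1,2),(1,3),(2,1),(3,1)\}$ immediately after Theorem \ref{alln} precisely to show the two hypotheses are distinct. The paper's proof therefore avoids Theorem \ref{all} entirely. For the forward direction it takes a strategy tree for $(2^{k+2},0,\ldots)$ and follows a path from the start vertex to an arbitrary vertex $u$, halving a coordinate bound at each step to find a tree node whose $u$-coordinate is at least $2$, then uses monotonicity. For the reverse direction it scales a strategy tree for $2$ at a vertex by $m = \ceil{n/2}$, obtaining a strategy-like tree whose leaves have weight $0$ or $m$, and closes by induction on $n$ --- a construction that only ever passes through unit-support positions. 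You would need to adopt something like this scaling-plus-induction argument; the single-split shortcut cannot be repaired.
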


\begin{proof}

Suppose first that the Seeker wins from all positions of the form ($n$, 0, 0, \ldots). Let $u$ be a vertex in $G$. Let $a$ be the starting vertex: then there is a path $a$--$v_1$--$\ldots$--$v_k$--$u$ from $a$ to $u$.

Let $T$ be a strategy tree for the position $(2^{k + 2}, 0, 0, \ldots)$. One of the children $P_1$ of this position in $T$ has a $v_1$-coordinate of at least $2^{k + 1}$. One of the children of $P_1$, $P_2$, has a $v_2$-coordinate of at least $2^k$, and so on. Eventually, some vertex of $T$ has a $u$-coordinate of at least 2. This means that the Seeker wins from the position that maps $u$ to two and all other vertices to zero.

Suppose, then, that the Seeker wins from all positions of the form (0, \ldots, 0, 2, 0, \ldots). We set out to prove that the Seeker wins from all positions of the form (0, \ldots, 0, $n$, 0, \ldots) by induction on $n$.

Let $n \ge 3$ and $m = \ceil{\frac{n}{2}}$. Let $P$ be a position of the form $(0, \ldots, 0, n, 0, \ldots)$. Obtain $P'$ from $P$ by replacing $n$ with $m$ and obtain $P''$ from $P'$ by replacing $m$ with two.

Let $T''$ be a strategy tree for $P''$. Obtain $T'$ from $T''$ by multiplying all vertices of $T''$ by $m$. Then $T'$ is a strategy-like tree for $P'$ all of whose leaves are either of the form $(0, \ldots, 0, m, 0, \ldots)$ or $(0, 0, \ldots)$.

By induction hypothesis, it is possible to replace every leaf $L$ of $T'$ with a strategy tree for $L$, thus obtaining a strategy tree for $P'$. It follows that the Seeker wins from $P'$ and, therefore, from $P$.

\end{proof}

The situations discussed in Theorem \ref{all} and Theorem \ref{alln} are distinct: there are restrictions $R$ such that the Seeker wins from all unit-support weight-two positions but not from all weight-two positions. One example of this is given by the restriction graph $\{(1, 2), (1, 3), (2, 1), (3, 1)\}$ corresponding to the restriction that answers $2i - 1$ and $2i$ are either both truths or both lies for all $i$.

\begin{definition}

A lie restriction $R$ is \emph{regular} if the set of all strings that satisfy $R$ is a regular language. 

\end{definition}

A restriction $R$ is regular iff there is a finite restriction graph $G$ for $R$.

\begin{theorem}

Let $R$ be a regular restriction and $G$ be a finite restriction graph for $R$ with $k$ vertices. Suppose that the Seeker wins for $n = 2^k$. Then the Seeker wins for all $n$.

\end{theorem}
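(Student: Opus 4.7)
The plan is to apply the backward direction of Theorem~\ref{alln}: it suffices to verify that the Seeker wins from every unit-support weight-two position $(0,\ldots,0,2,0,\ldots)$. Without loss of generality, every vertex of $G$ is reachable from the start vertex $a$, since otherwise we replace $G$ by the subgraph induced on the reachable vertices (still a restriction graph for $R$, with $k' \le k$ vertices); the standard coordinatewise monotonicity of the game — if the Seeker wins from a position $P$ then she also wins from any $P'$ with $P' \le P$ coordinatewise, as one sees by scaling each $Q_0, Q_1$ in the split decomposition — transfers the hypothesis so that the Seeker still wins from $(2^{k'}, 0, 0, \ldots)$ in the reduced graph.

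For each vertex $u$ of $G$, I would fix a simple path $a = v_0 \to v_1 \to \cdots \to v_\ell = u$; since $G$ has $k$ vertices and the path is simple, $\ell \le k - 1$. I would then mimic the descent used in the forward direction of Theorem~\ref{alln}: starting from a strategy tree $T$ for $(2^k, 0, 0, \ldots)$, whose root has $v_0$-coordinate equal to $2^k$, at each step I descend into whichever of the two children has $v_{i+1}$-coordinate at least half of the parent's $v_i$-coordinate (such a child exists because the children form a split of the parent). After $\ell$ descents I arrive at a node of $T$ whose $u$-coordinate is at least $2^k/2^\ell \ge 2$; applying monotonicity once more yields a strategy tree for $(0,\ldots,0,2,0,\ldots)$ at $u$.

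Having established winning at every unit-support weight-two position, the backward direction of Theorem~\ref{alln} delivers the conclusion that the Seeker wins from $(n,0,0,\ldots)$ for every $n$. The central technical point is the halving step, inherited from the proof of Theorem~\ref{alln}; here its success hinges on the elementary observation that any simple path in a graph with $k$ vertices uses at most $k - 1$ arcs, which is precisely what makes the starting weight $2^k$ sufficient.
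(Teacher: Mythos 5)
Your proposal is correct and follows essentially the same route as the paper: the paper's proof simply invokes the descent argument from the first part of Theorem~\ref{alln} (halving along a path from the start vertex to each $u$, which has length at most $k-1$ since $G$ has $k$ vertices, so weight $2^k$ suffices to leave a $u$-coordinate of at least $2$) and then applies Theorem~\ref{alln}. Your explicit treatment of reachability and coordinatewise monotonicity only spells out steps the paper leaves implicit.
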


\begin{proof}

As in the first part of the proof of Theorem \ref{alln}, given that the Seeker wins for all $n = 2^k$, we conclude that the Seeker wins from all positions of the form $(0, \ldots, 0, 2, 0, \ldots)$. By Theorem \ref{alln}, the proof is complete.

\end{proof}

\begin{theorem}

Let $R$ be a regular restriction such that the Seeker wins for all $n$. Then the Seeker wins in $O(\log n)$ steps.

\end{theorem}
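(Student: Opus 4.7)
The plan is to iterate the halving construction from the proof of Theorem~\ref{alln}, exploiting the finiteness of $G$ to obtain a uniform bound on the cost of each halving step.

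First, by Theorem~\ref{alln}, the hypothesis implies that the Seeker wins from every unit-support weight-two position $(0, \ldots, 0, 2, 0, \ldots)$. Since $G$ has only $k$ vertices, there are only $k$ such positions. For each vertex $v$, I fix a strategy tree $T_v$ witnessing a win from the unit-support weight-two position at $v$, and I let $D_v$ denote its depth. Setting $D = \max_v D_v$ yields a constant that depends on $R$ and $G$ but not on $n$; this is the key place where finiteness of $G$ is used.

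Next I would replay the second half of the proof of Theorem~\ref{alln} recursively, starting from $(n, 0, 0, \ldots)$ and tracking depth. Given any unit-support position of weight $n \ge 3$ at a vertex $v$, set $m = \lceil n/2 \rceil$ and multiply $T_v$ vertex-wise by $m$; this produces a strategy-like tree of depth $D_v \le D$ whose leaves are either zero or unit-support positions of weight $m$, possibly at other vertices of $G$. Attach a recursively-built strategy tree at each nontrivial leaf. The resulting depth satisfies the recursion $T(n) \le D + T(\lceil n/2 \rceil)$ for $n \ge 3$, together with $T(2) \le D$ and $T(1) = T(0) = 0$. Unrolling gives $T(n) \le D \lceil \log_2 n \rceil$, which is $O(\log n)$.

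The heart of the argument is rather light, since the halving machinery is already built into the proof of Theorem~\ref{alln}; the only delicate point is that the finite maximum $D$ is genuinely well-defined, and this is exactly where regularity enters. Were $G$ infinite, the strategy-tree depths over unit-support weight-two positions at different vertices would have no reason to remain bounded, and one could not conclude a logarithmic step count from pure iteration of the halving step.
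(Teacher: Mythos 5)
Your proof is correct and follows essentially the same route as the paper: a uniform constant bound (your $D$, the paper's $c$) on the depth of strategy trees for the finitely many unit-support weight-two positions, iterated through the halving construction of Theorem~\ref{alln} to give depth at most $D\ceil{\log_2 n}$. Your explicit remark that finiteness of $G$ is what makes the maximum $D$ well-defined is a point the paper leaves implicit, but the argument is the same.
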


\begin{proof}

Let $c$ be such that the Seeker wins all positions of the form (0, \ldots, 0, 2, 0, \ldots) in at most $c$ steps and $h(n)$ be the number of iterations of the transform $n \to \ceil{\frac{n}{2}}$ that it takes to boil $n$ down to $1$. The construction in the second part of the proof of Theorem \ref{alln} shows that there is a strategy tree for the position ($n$, 0, 0, \ldots) of depth at most $c \cdot h(n)$, and this is $O(\log n)$.

\end{proof}

There are non-regular restrictions $R$ such that the Seeker wins for all $n$ but cannot do so in a logarithmic number of steps. For instance, construct a restriction graph $G$ for $R$ as follows:

\begin{itemize}
\item The vertices of $G$ are $(a, b)$, where $a$ ranges over all positive integers and $1 \le b \le F(a)$ where $F$ grows very rapidly.
\item $(a, 1)$ points to $(a + 1, 1)$ and $(a, 2)$ for all $a$.
\item For all $b \ge 2$, $(a, b)$ points to itself and, as long as $b < F(a)$, to $(a, b + 1)$.
\end{itemize}

We proceed to describe an equivalent form of the criterion given in Theorem \ref{alln} which shall prove useful in section \ref{substr}.

\begin{definition}

The \emph{two-string game} for a lie restriction $R$ is played between two players as follows. The starting position is given by two binary strings $u_1$ and $u_2$. On each turn, the Obscurer extends one of $u_1$ and $u_2$ (that is, appends a character to it) and the Seeker replies by extending the other. The Seeker's goal is that one of the two strings ceases to satisfy $R$ (it does not matter whether this happens following a move by the Seeker or the Obscurer) and the Obscurer's goal is to prevent this.

\end{definition}

We shall freely identify strings that satisfy a restriction $R$ with vertices in a restriction graph $G$ for $R$. Moves in the two-string game can thus also be regarded as transitions within $G$. 

\begin{theorem}\label{twostrl}

The Seeker wins the Renyi-Ulam game with lie restriction $R$ from a position $P$ of weight two iff the Seeker wins the two-string game for $R$ from a starting position given by the $P$-nonzero vertices of the lie restriction graph $G$ (or, should there be just one such vertex $v$, by two copies of $v$). 

\end{theorem}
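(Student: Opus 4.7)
The plan is to play the two games against each other round by round, using a $2\times 2$ Hall-type argument to translate Seeker strategies. Let $(u,v)$ be the pair of $P$-nonzero vertices of $G$, with $u=v$ when $P$ has unit support. A brief case analysis of the split $(Q_0,Q_1)$ shows that a weight-two Renyi-Ulam round from $(u,v)$ offers the Seeker only two essentially different moves: the ``same'' split, which places both tokens into $Q_0$ (equivalently, both into $Q_1$) and yields children $(d_0(u),d_0(v))$ and $(d_1(u),d_1(v))$, and the ``opposite'' split, which places them into different parts and yields children $(d_0(u),d_1(v))$ and $(d_1(u),d_0(v))$; here $d_c(x)$ denotes the $c$-successor of $x$ in $G$, and whenever a $d_c$ is undefined the corresponding token is dropped and that branch becomes an automatic Seeker win. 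A two-string round at $(u,v)$ likewise lands on the pair $(d_a(u),d_b(v))$ for some $a,b\in\{0,1\}$, with the Obscurer fixing one of the two labels (subject to the corresponding arc existing) and the Seeker fixing the other; a Seeker reply whose label lacks a corresponding arc is itself an immediate win.

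For Renyi-Ulam $\Rightarrow$ two-string I would induct on the depth of a strategy tree for $P$. Given that the root's split is ``same'' (respectively ``opposite''), I would have the Seeker reply in the two-string game with the same label (respectively the opposite label) that the Obscurer just played. Whichever of the (at most) four Obscurer moves occurs, the resulting pair of vertices equals one of the two Renyi-Ulam children of the root, so the Seeker continues with the corresponding subtree; a two-string reply that breaks $R$ matches a weight-dropping leaf of the strategy tree. For the reverse direction, given a winning two-string strategy from $(u,v)$ I would form the $2\times 2$ Boolean matrix $M(a,b)$ whose $(a,b)$ entry records whether the pair $(d_a(u),d_b(v))$ is two-string winning, with the convention that an undefined $d_a(u)$ or $d_b(v)$ forces the entry to be true. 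The winningness of the two-string strategy forces every row and every column of $M$ to contain a true entry; for a $2\times 2$ Boolean matrix this is equivalent to the existence of a permutation $\pi\in\{\mathrm{id},\mathrm{swap}\}$ with both $M(0,\pi(0))$ and $M(1,\pi(1))$ true, and choosing $\pi$ realizes the ``same'' or ``opposite'' Renyi-Ulam split whose two children are two-string winning and hence Renyi-Ulam winning by the inductive hypothesis.

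The main obstacle is well-foundedness of the induction. I would handle it by refining both sides of the equivalence to the graded statement ``Seeker wins in at most $k$ rounds''. Both the strategy-tree-depth-$k$ notion and the two-string $k$-round notion are the $k$-th iterates of the same monotone operator on Boolean functions of vertex pairs of $G$, so they coincide at every level; taking the union over $k$ yields the theorem.
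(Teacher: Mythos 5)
Your proof is correct, and its combinatorial core coincides with the paper's: both rest on the observation that a split of a weight-two position $P_{u,v}$ produces exactly one of the two children-pairs $\bigl(P_{d_0(u),d_0(v)},P_{d_1(u),d_1(v)}\bigr)$ or $\bigl(P_{d_0(u),d_1(v)},P_{d_1(u),d_0(v)}\bigr)$, matched against the $2\times 2$ structure of a single two-string round. (Like the paper's own proof, your case analysis tacitly reads the definition of a split as $P_1$ being the $0$-push of $Q_0$ plus the $1$-push of $Q_1$, and $P_2$ symmetrically; the definition as literally printed assigns only one summand to each child and is evidently a typo.) Your Hall-type step --- every row and column of $M$ contains a true entry, hence some permutation is all-true --- is precisely the De Morgan dual of the paper's ``key vertex'' argument, which, working from the Obscurer's side, extracts from ``one member of each children-pair is an Obscurer win'' a single successor vertex common to both chosen Obscurer-win positions. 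The one substantive difference is packaging: the paper argues both directions contrapositively, so the Obscurer only has to maintain a safety invariant forever and no induction on depth is ever needed, whereas your direct Seeker-side argument must confront well-foundedness. Your resolution --- the $k$-round winning sets of the two games are the iterates of one and the same monotone one-step operator, hence agree at every level --- is sound; the only point you should make explicit is that identifying ``Seeker wins the two-string game'' with ``Seeker wins within $k$ rounds for some $k$'' relies on the finite branching of the game (K\"{o}nig's lemma), since $G$ need not be finite, while on the Renyi--Ulam side finiteness of strategy trees gives this for free.
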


\begin{proof}

Let $P_{x, y}$ be the position of weight two with nonzero coordinates $x$ and $y$ (if $x \neq y$ then both the $x$ and $y$ coordinate of $P_{x, y}$ equal one, and if $x = y$ then the $x$-coordinate of $P_{x, y}$ equals two).

Obtain $H$ from $G$ by adjoining an additional \emph{error vertex} $e$ such that:

\begin{itemize}
\item for every vertex $x$ of $G$ and every label $a$ such that in $G$ no arc labeled $a$ originates at $v$, in $H$ an arc labeled $a$ points from $v$ to $e$, and
\item in $H$ two arcs labeled 0 and 1 point from $e$ to $e$.
\end{itemize}

Then in $H$ every vertex has out-degree two.

Consider $P = P_{u, v}$. Let, in $H$, $a$ and $b$ be the children of $u$ and $c$ and $d$ be the children of $v$.

Suppose that the Obscurer wins the Renyi-Ulam game with restriction $R$ from $P_{u, v}$. There are only two possibilities for the children of $P_{u, v}$ in a strategy-like tree: $P_{a, c}$ and $P_{b, d}$, or $P_{a, d}$ and $P_{b, c}$. Since $P_{u, v}$ is a (Renyi-Ulam) win for the Obscurer, (at least) one position in the first pair and (at least) one position in the second pair are also (Renyi-Ulam) wins for the Obscurer. Without loss of generality, those are $P_{a, c}$ and $P_{a, d}$: in this case, we say that $a$ is a key vertex for $P_{u, v}$.

Consider the two-string game for $R$ played from $P_{u, v}$. By always moving to the key vertex for the current position, the Obscurer attains that the position following the Seeker's move is always a (Renyi-Ulam) win for the Obscurer. This ensures that none of the two strings ever reaches the error vertex.

For the converse, let $P_{u, v}$ be an arbitrary position such that the Obscurer wins the two-string game for $R$ starting from $P_{u, v}$. Suppose that transitioning from $u$ to $a$ is a (two-string) winning move for the Obscurer. Then, in the Ulam-Renyi game with lie pattern $R$, it is possible that the Obscurer replies to the Seeker's question in such a way that the resulting position has a nonzero $a$ coordinate. Since, in the two-string game, no string ever reaches the error vertex, this ensures that no position of weight one ever occurs.

\end{proof}

\begin{theorem}\label{twostr}

The Seeker wins the Renyi-Ulam game with lie restriction $R$ from all positions of the form ($n$, 0, 0, \ldots) iff the Seeker wins the two-string game for $R$ from all starting positions such that the two initial strings $u_1$ and $u_2$ are equal.

\end{theorem}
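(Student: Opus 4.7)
The plan is to obtain Theorem \ref{twostr} as an immediate corollary of Theorems \ref{alln} and \ref{twostrl} chained together, with only a small bookkeeping step in between. First, I apply Theorem \ref{alln}: ``Seeker wins from every starting position $(n, 0, 0, \ldots)$'' is equivalent to ``Seeker wins from every unit-support weight-two position $(0, \ldots, 0, 2, 0, \ldots)$''. Each such position places weight two at a single vertex $v$ of $G$.

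Second, I apply Theorem \ref{twostrl} to each such unit-support position: the Seeker wins it iff the Seeker wins the two-string game starting from two copies of $v$. Ranging over all $v$, ``Seeker wins every $(n, 0, 0, \ldots)$'' is now equivalent to ``Seeker wins the two-string game from $(v, v)$ for every vertex $v$ of $G$.''

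It remains to reconcile the formulation ``$(v, v)$ for every vertex $v$'' with the statement's formulation ``$(u_1, u_2)$ for every pair of equal strings.'' Every string $u$ satisfying $R$ reaches a unique vertex $v$ of $G$ from the start vertex, and, conversely, under the standing assumption that every vertex of $G$ is reachable, every vertex $v$ arises as the endpoint of some such string. Since the two-string game's legal moves and winning condition depend only on the vertices currently occupied (this is the whole point of identifying strings with vertices, as noted just before Theorem \ref{twostrl}), the game starting from $(u, u)$ is exactly the game starting from $(v, v)$, where $v$ is the endpoint of $u$. Hence the two ranges ``all equal string pairs $(u, u)$'' and ``all diagonal vertex pairs $(v, v)$'' produce the same collection of two-string games, and chaining the equivalences yields the theorem.

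There is essentially no obstacle: the only substantive ingredients are Theorems \ref{alln} and \ref{twostrl}, which are already proved, and the only point that requires a moment's thought is the vertex-string identification in the final paragraph, which is routine given the conventions of the paper.
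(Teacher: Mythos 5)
Your proposal is correct and is exactly the paper's argument: the paper's entire proof reads ``This is a straightforward corollary of Theorem \ref{alln} and Theorem \ref{twostrl},'' and you have simply spelled out the chaining and the string-to-vertex bookkeeping (including the reachability point that the paper itself uses implicitly in the proof of Theorem \ref{alln}).
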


\begin{proof}

This is a strightforward corollary of Theorem \ref{alln} and Theorem \ref{twostrl}.

\end{proof}

Notice that, when $G$ is finite, the number of essentially different positions in the two-string game for $R$ is bounded by the square of the number $k$ of vertices in $G$. Consequently, when $k$ is small, it is feasible to determine who has a winning strategy in the two-string game for $R$ by means of direct computation. We shall make frequent use of this technique in the following section.

\section{Forbidden substrings}\label{substr}

Let $S$ be a set of binary strings and $R = r(S)$ be the restriction that a binary string $s$ does not contains a substring in $S$. Given $S$, who wins in a Renyi-Ulam game with restriction $R$?

The case $|S| = 1$ was first studied, and solved completely, by Czyzowicz, Lakshmanan and Pelc in \cite{CLP}. Theorem \ref{one} below is the main result in their paper.

\begin{definition}

We say that $S_1$ is \emph{expanded} by $S_2$ if $S_1$ contains a substring of every string in $S_2$.

\end{definition}

In this case, $R_1 = r(S_1) \subseteq R_2 = r(S_2)$ and $R_1$ is more restrictive on the Obscurer's capacity to mislead the Seeker than $R_2$. This means that, if the Obscurer wins for $R_1$, then she also wins for $R_2$. Conversely, if the Seeker wins for $R_2$, then he also wins for $R_1$.

We set out to find a natural sufficient condition for the Obscurer to win. By Theorem \ref{twostr}, it suffices to find a sufficient condition for the Obscurer to win the two-string game for $R$ from some starting position comprised of two identical strings.

\begin{definition}

The \emph{risk} of a string $p$ is the length of the longest suffix of $p$ that is a prefix of an element of $S$. When $p$ is of risk $l$ and the $l$-suffix of $p$ is an $l$-prefix of $q \in S$, we say that $p$ is of risk $l$ \emph{via} $q$. The \emph{risk} of a position $P$ in the two-string game for $R$ is the maximum of the risks of the two strings that comprise $P$.

\end{definition}

Notice that extending a string increases its risk by at most one.

Suppose that all strings in $S$ are of length at least $l + 1$ and that (starting from a position of risk at most $l - 1$, say, two empty strings) the Obscurer can play so that, following her every move, the risk of the position is at most $l - 1$. Then the Obscurer wins.

What is a natural sufficient condition for that?

\begin{definition}

The \emph{$l$-flip} of a string $p = a_1a_2 \ldots a_m$ of length $m \ge l + 1$ is the string $a_1a_2 \ldots a_l\overline{a_{l + 1}}$.

\end{definition}

Suppose that, for some $l \ge 1$, all $(l - 1)$-flips and all $l$-flips of elements of $S$ are of risk at most $l - 1$. Then the Obscurer can keep risk following her moves below or at the $l - 1$ level by always spoiling the riskier string. This is the condition we were looking for, and its negation is a necessary condition for the Seeker to win.

\begin{lemma}\label{flip}

For the Obscurer to win the Renyi-Ulam game with lie restriction $R = r(S)$, it is sufficient that there is a positive integer $l$ such that all $(l - 1)$-flips and all $l$-flips of elements of $S$ are of risk at most $l - 1$.

For the Seeker to win the Renyi-Ulam game with lie restriction $R = r(S)$, it is necessary that, for all $l \ge 1$, there is either some $(l - 1)$-flip or some $l$-flip of an element of $S$ of risk at least $l$.

\end{lemma}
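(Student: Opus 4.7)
The second statement is the contrapositive of the first, so the plan is to prove the sufficient direction. By Theorem \ref{twostr}, the Obscurer wins the Renyi-Ulam game from some position of the form $(n, 0, 0, \ldots)$ iff she wins the two-string game for $R$ from some starting position with $u_1 = u_2$; I will exhibit such a winning strategy starting from the two empty strings $(\varepsilon, \varepsilon)$. The strategy is the one foreshadowed just before the lemma: always extend the riskier of the two strings by a letter that breaks its matched prefix. The invariant to maintain is that after every Obscurer move both strings have risk at most $l - 1$; at the initial position this holds trivially since $\varepsilon$ has risk $0$.

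The technical heart of the argument is the following extension lemma: if $p$ has risk $r$ via $q = a_1 a_2 \ldots a_m \in S$, then $p \overline{a_{r+1}}$ has risk at most the risk of the $r$-flip of $q$. To see this, classify suffixes of $p \overline{a_{r+1}}$ by length $k$. Any $k \ge r + 2$ would force a suffix of $p$ of length $k - 1 \ge r + 1$ to be a prefix of some element of $S$, contradicting that $p$ has risk $r$. Any $k \le r + 1$ coincides with a suffix of the $r$-flip of $q$, whose risk is bounded by the lemma's hypothesis. This is the step I expect to require the most care.

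Maintenance of the invariant then runs as follows. At the end of the previous Obscurer move both strings have risk at most $l - 1$; the Seeker's reply raises the risk of exactly one string by at most $1$, so when it is the Obscurer's turn again at most one string has risk equal to $l$. She targets the riskier string, whose risk $r$ is $l - 1$ or $l$, picks any witness $q$, and appends $\overline{a_{r+1}}$. The extension lemma combined with the $(l-1)$-flip or $l$-flip hypothesis (according to which value of $r$ occurs) drives the resulting risk to at most $l - 1$, restoring the invariant. A mild technicality worth flagging is that $\overline{a_{r+1}}$ must be independent of the choice of witness $q$: if two elements of $S$ shared an $r$-prefix but differed at position $r + 1$, then the $r$-flip of one would have the $(r+1)$-prefix of the other as a suffix and thus risk at least $r + 1$, contradicting the hypothesis.

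Finally, under the implicit assumption that every element of $S$ has length at least $l + 1$ (which is required for the $l$-flip even to be defined), a string of risk at most $l$ contains no element of $S$ as a suffix and hence cannot acquire one as a new substring. Since after every Seeker move both strings have risk at most $l$, both remain legal forever and the Obscurer wins the two-string game; by Theorem \ref{twostr} this completes the sufficient direction, and the necessary direction follows by contrapositive.
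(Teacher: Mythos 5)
Your proposal is correct and follows essentially the same route as the paper, which states the lemma as a summary of the preceding discussion ("the Obscurer can keep risk following her moves below or at the $l-1$ level by always spoiling the riskier string") without writing out a formal proof. Your extension lemma and the observation that the flip hypothesis forces all witnesses to agree at position $r+1$ are exactly the details the paper leaves implicit, and they check out.
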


Let us test the strength of this criterion on a known result.

\begin{theorem}\label{one}

When $S = \{s_1\}$, the Seeker wins iff $s_1$ is one of $0$, $1$, $01$, and $10$.

\end{theorem}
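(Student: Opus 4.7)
The plan is to handle the two directions separately. For ``$s_1 \notin \{0, 1, 01, 10\}$ implies the Obscurer wins'', I apply Lemma \ref{flip} with $l$ chosen according to the first two characters of $s_1$. For ``$s_1 \in \{0, 1, 01, 10\}$ implies the Seeker wins'', I invoke Theorem \ref{twostr} and play out the two-string game on the restriction graph, which in each of these four cases has at most two vertices.

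Write $s_1 = a_1 a_2 \ldots a_m$. If $m \geq 2$ and $a_1 = a_2$, I take $l = 1$. The $0$-flip $\overline{a_1}$ has its only non-empty suffix different from $a_1$, so its risk is $0$. The $1$-flip $a_1 \overline{a_2}$ has suffixes $\overline{a_2}$ and $a_1 \overline{a_2}$; the equality $a_1 = a_2$ rules out either being a prefix of $s_1$, again yielding risk $0 = l - 1$. If $m \geq 3$ and $a_1 \neq a_2$, I take $l = 2$. The $1$-flip has length $l$ and its last character is wrong, so its risk is at most $1$ automatically; the only length-$2$ suffix of the $2$-flip that could push its risk above $1$ is $a_2 \overline{a_3}$, and its initial character $a_2 \neq a_1$ prevents it from being a prefix of $s_1$. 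Together these two cases cover every $s_1$ with $m \geq 3$ and every length-$2$ string with $a_1 = a_2$, so only $0$, $1$, $01$, $10$ are left for the Seeker.

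For those four strings I use Theorem \ref{twostr}. The case $s_1 = 0$ is immediate: the restriction graph is a single vertex with a $1$-self-loop, so from two copies of it the Seeker simply appends $0$ to the string the Obscurer did not touch. The case $s_1 = 10$ has two restriction-graph vertices which I label \emph{safe} (empty or ending in $0$) and \emph{risky} (ending in $1$); the risky vertex has no legal $0$-extension. From two copies of the risky vertex the Seeker wins on his first move by appending $0$. From two copies of the safe vertex, every Obscurer reply leads to a position containing a risky vertex, and a short case analysis shows that from any such mixed position the Seeker can either immediately append $0$ to a risky string, or promote the remaining safe string to risky and so reach the all-risky position. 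The cases $s_1 = 1$ and $s_1 = 01$ follow by swapping the roles of $0$ and $1$.

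The only real subtlety is the case analysis in the mixed safe/risky position for $s_1 = 10$, where the Obscurer can keep her own risky string alive by extending it by $1$ and thereby force the Seeker to act on the safe side. The decisive observation that keeps the argument short is that the Seeker can then extend the safe string by $1$, landing in the all-risky position, which we have already shown is a loss for the Obscurer.
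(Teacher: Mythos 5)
Your proof is correct and follows essentially the same route as the paper: Lemma \ref{flip} applied with $l \in \{1, 2\}$ eliminates every string other than $0$, $1$, $01$, $10$, and the four remaining cases are settled by direct analysis of the game. The only differences are presentational --- you organize the flip argument by the first two characters of $s_1$ (which lets the lemma absorb $00$ and $11$ as well) and you spell out the two-string games that the paper dismisses as ``direct computation.''
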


\begin{proof}

Notice first that the $l$-flip of $s_1$ is not a prefix of $s_1$ so that its risk does not exceed $l$.

Suppose that $|s_1| \ge 3$, $s_1 = a_1a_2 \ldots a_m$, and, in Lemma \ref{flip}, set $l = 2$. Then the 2-flip of $s_1$ must be of risk 2, which gives us $a_2\overline{a_3} = a_1a_2$.

Set, then, $l = 1$ in Lemma \ref{flip}: analogously, the 1-flip of $s_1$ needs to be of risk 1, giving us $\overline{a_2} = a_1$, a contradiction.

It follows that $|s_1| \le 2$ and direct computation yields the claim.

\end{proof}

We turn to a complete description of all $S = \{s_1, s_2\}$ such that the Seeker wins. In the series of lemmas that follow, we assume that $S = \{s_1, s_2\}$ and $|s_1| \le |s_2|$ except where noted otherwise.

\begin{lemma}\label{comm}

If the Seeker wins, then  the strings $s_1$ and $s_2$ do not share a common substring $p$ equal to $00$ or $11$ or of length at least three.

\end{lemma}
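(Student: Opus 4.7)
We plan to prove the contrapositive via the two-string game (Theorem \ref{twostr}): supposing $s_1$ and $s_2$ share a common substring $p$ with $p \in \{00, 11\}$ or $|p| \geq 3$, we exhibit a winning strategy for the Obscurer from some starting position $u_1 = u_2$.

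First we dispose of the case in which $s_1$ is a substring of $s_2$, so that $R = r(\{s_1\})$: since each element of $\{0, 1, 01, 10\}$ has no substring equal to $00$, $11$, or of length at least $3$, the hypothesis forces $s_1 \notin \{0, 1, 01, 10\}$, and Theorem \ref{one} delivers the conclusion. Henceforth we assume $s_1$ is not a substring of $s_2$ and, using the $0 \leftrightarrow 1$ symmetry, that either $p = 00$ or $|p| \geq 3$.

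The heart of the argument is to locate in the restriction graph $G$ a ``safe region'' $T$ together with a ``recovery zone'' $N$ such that (i) from each state of $T$, the Obscurer has at least one outgoing arc into $T$; (ii) both outgoing arcs of every state of $T$ avoid errors, landing in $T \cup N$; and (iii) from each state of $N$, the unique non-error outgoing arc returns to $T$. Given such $(T, N)$ and any $q_0 \in T$, the Obscurer's strategy starting from $u_1 = u_2$ equal to any string reaching state $q_0$ is: on each turn, if one string is currently in $N$, extend that string along its forced arc into $T$; otherwise extend either string along an arc that stays in $T$. An induction on turns shows the position always lies in $T^2 \cup (T \times N) \cup (N \times T)$ at the start of each of her turns, so the Seeker never faces the opportunity to push a string from $N$ into an error, and the Obscurer wins indefinitely.

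The concrete construction of $(T, N)$ depends on the hypothesis on $p$. When $p = 00$, the fact that both $s_i$ contain the digit $0$ makes the maximal length $m$ of an initial $1$-run of $s_1$ or $s_2$ finite; starting from the state $q_0 = 1^m$, which admits a $1$-self-loop, we grow $T$ by including every state safely reachable by Obscurer plays, and we let $N$ be the pre-error neighbours bordering $T$. The $00$ occurring inside each $s_i$ is what guarantees that the pre-error states in $N$ each possess a recovery arc back into $T$. When $|p| \geq 3$, either $p$ contains a $00$ or $11$ substring, reducing to the previous case, or $p$ is strictly alternating, in which case an analogous construction built on the periodic structure of $p$ yields the required $(T, N)$. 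The main technical obstacle is verifying recovery condition (iii) in the alternating subcase of $|p| \geq 3$; here we use the assumption $s_1 \not\subseteq s_2$ to rule out collisions between the pre-error states arising from $s_1$ and $s_2$ that would otherwise block the Obscurer's recovery arcs.
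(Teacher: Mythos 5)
Your proposal contains the germ of the correct argument but then abandons it for an unnecessary and incompletely executed construction. The paper's proof is one line: since $p$ is a common substring of $s_1$ and $s_2$, the singleton $\{p\}$ is \emph{expanded} by $S$ in the sense of the definition preceding the lemma (it contains a substring of every string in $S$, namely $p$ itself), so $r(\{p\}) \subseteq r(S)$ and an Obscurer win for $r(\{p\})$ transfers to $r(S)$; Theorem \ref{one} supplies that Obscurer win because $p$ is $00$, $11$, or of length at least three, hence not in $\{0, 1, 01, 10\}$. You invoke exactly this reduction to Theorem \ref{one}, but only in the special case where $s_1$ is a substring of $s_2$ --- a restriction that is not needed, since the expansion argument requires only that $p$ be a substring of each $s_i$, not that one $s_i$ contain the other.

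Having set that aside, the bulk of your proposal is a direct Obscurer strategy via a ``safe region'' $T$ and ``recovery zone'' $N$ in the restriction graph, and as written this part has a genuine gap: $T$ and $N$ are never concretely defined (``we grow $T$ by including every state safely reachable by Obscurer plays'' is circular as a definition of safety), conditions (i)--(iii) are never verified for any actual graph, and the two load-bearing claims --- that the occurrence of $00$ inside each $s_i$ guarantees recovery arcs, and that in the alternating case $s_1 \not\subseteq s_2$ rules out collisions among pre-error states --- are asserted without argument. The second claim is particularly suspect because nothing in the lemma's hypothesis prevents the two strings from interacting badly near their shared alternating substring. None of this machinery is needed: delete everything after your first paragraph, drop the hypothesis that $s_1$ is a substring of $s_2$, and observe that $\{p\}$ is expanded by $S$ in every case covered by the lemma.
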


\begin{proof}

For, otherwise $\{p\}$ is expanded by $S$ and Theorem \ref{one} gives us a contradiction.

\end{proof}

\begin{lemma}\label{len}

If the Seeker wins, then the length of $s_1$ does not exceed four.

\end{lemma}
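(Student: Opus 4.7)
The plan is to derive a contradiction from $|s_1| \ge 5$ by applying the necessary condition of Lemma \ref{flip} at two values of $l$ and repeatedly invoking Lemma \ref{comm}. Write $s_1 = a_1 a_2 a_3 a_4 a_5 \ldots$ and $s_2 = b_1 b_2 \ldots$, and note that $|s_2| \ge |s_1| \ge 5$.

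First, I will apply Lemma \ref{flip} at $l = 4$: some 3-flip or 4-flip of $s_1$ or $s_2$ must have risk at least $4$. A 3-flip of $s_i$ has length $4$, so risk $\ge 4$ forces the whole 3-flip to be a prefix of an element of $S$; since it is not a prefix of $s_i$ itself, this makes $s_1$ and $s_2$ share a length-$3$ prefix, violating Lemma \ref{comm}. For the 4-flip of $s_i$ (length $5$) three subcases arise: (a) the whole flip is a prefix of the other string, giving a common substring of length $4$; (b) its length-$4$ suffix is a prefix of the other string, giving a common substring of length $3$ beginning at position $2$ of $s_i$; or (c) its length-$4$ suffix is a prefix of $s_i$ itself, which forces $a_1 = a_2 = a_3 = a_4 = \overline{a_5}$, i.e.\ $s_i$ begins $aaaa\bar a$. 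Options (a) and (b) contradict Lemma \ref{comm}, so (c) must hold for either $s_1$ or $s_2$.

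Hence either $s_1$ begins $aaaa\bar a$ or $s_2$ begins $bbbb\bar b$. The offending string then contains the pair $00$ or $11$, so Lemma \ref{comm} forces the other string to avoid both $00$ and $11$ and therefore to alternate, taking the form $c \bar c c \bar c \ldots$ for some bit $c$.

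Next, I will apply Lemma \ref{flip} at $l = 3$ to this rigid configuration. There are exactly four candidate flips (the 2-flip and 3-flip of each of $s_1$ and $s_2$) to test against the length-$3$ and length-$4$ prefixes of $s_1$ and $s_2$. A mechanical check shows that no flip attains risk $\ge 3$: a flip of the alternating string cannot be a prefix of the $aaaa\bar a$ string because this would force $c = a$ and $\bar c = a$ simultaneously, and symmetric obstructions rule out all remaining matches within each string and in the opposite direction. This contradicts the necessary condition of Lemma \ref{flip} and completes the proof. The hard part will be keeping the $l = 4$ case analysis tidy; once the canonical form is extracted, the $l = 3$ verification is routine.
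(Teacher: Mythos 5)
Your $l = 4$ step matches the paper's and is correct: it forces one of the two strings, say $s_1$, to begin $aaaa\bar a$. The argument breaks at the next step. You claim that because this string contains the pair $aa$, Lemma \ref{comm} forces the other string to avoid \emph{both} $00$ and $11$ and hence to alternate. But Lemma \ref{comm} only forbids \emph{common} substrings equal to $00$ or $11$; knowing that $s_1$ contains $aa$ rules out $aa$ in $s_2$, and rules out $\bar a\bar a$ in $s_2$ only if $s_1$ also contains $\bar a\bar a$, which you do not know (e.g.\ $s_1 = 000010\ldots$ need never contain $11$). So $s_2$ need not alternate -- it merely avoids $aa$ -- and your canonical form is wrong.

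This gap is consequential, not cosmetic: the configuration $s_1 = 00001\ldots$, $s_2 = 1110\ldots$ is consistent with Lemma \ref{comm} and \emph{does} satisfy the necessary condition of Lemma \ref{flip} at $l = 3$ (the 3-flip of $s_2$ is $1111$, whose length-3 suffix $111$ is a prefix of $s_2$ itself, so it has risk 3). Hence your claim that ``no flip attains risk $\ge 3$'' at $l = 3$ is false, and no contradiction is obtained there. The paper instead runs the $l = 3$ analysis on the general configuration, concluding that $s_2$ must begin $0001$ or $1110$; the first is killed by Lemma \ref{comm} (common substring $000$), but the second survives $l = 3$ and requires one further application of Lemma \ref{flip} at $l = 2$ to finish. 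Your proof never reaches that step, so it is incomplete.
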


\begin{proof}

Suppose that $5 \le |s_1| \le |s_2|$ and apply Lemma \ref{flip} with $l = 4$.

Suppose that a 3-flip of $s_1$ is of risk 4. It cannot be so via $s_1$ because of the definition of a flip, nor can it be so via $s_2$ because of Lemma \ref{comm}: a contradiction. Analogously, a 3-flip of $s_2$ cannot be of risk 4.

Suppose, then, that a 4-flip of $s_1$ is of risk at least 4. It cannot be so via $s_2$ because of Lemma \ref{comm}. Therefore, it must be so via $s_1$, which gives us $s_1 = 00001\ldots$ modulo inverting all characters. (The case when a 4-flip of $s_2$ is of risk 4 is treated analogously.)

Next we apply Lemma \ref{flip} with $l = 3$.

As above, no 2-flip of either string may be of risk 3 and no 3-flip of either string may be of risk 3 via the other (it is important at this point that we know that $s_1$ starts on a series of equal characters so that Lemma \ref{comm} applies).

If a 3-flip of $s_1$ is of risk 3 via $s_1$, we arrive at a contradiction. Therefore, a 3-flip of $s_2$ is of risk 3 via $s_2$ and $s_2 = 0001\ldots$ or $s_2 = 1110\ldots$.

When $s_2 = 0001\ldots$, Lemma \ref{comm} gives us a contradiction. When $s_2 = 1110\ldots$, Lemma \ref{flip} gives us a contradiction with $l = 2$.

\end{proof}

Notice that we could have obtained the same result by testing all pairs of length-5 strings directly. The above argument, however, generalizes: see Theorem \ref{lenn}.

\begin{lemma}\label{len4}

When $|s_1| = 4$, the Seeker wins for the following sets $S$: $\{0001$, $1011\}$, $\{0001$, $1101\}$, $\{0010$, $0111\}$, $\{0011$, $0101\}$, $\{0100$, $0111\}$, and the sets obtained from them by inverting all characters.

\end{lemma}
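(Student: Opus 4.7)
The approach is to reduce via Theorem \ref{twostr} to the two-string game for $R = r(S)$ and then solve each resulting game by finite computation. Since every listed $S$ satisfies $|s_1| = |s_2| = 4$, a restriction graph $G$ for $R$ has only a handful of vertices --- the proper prefixes of $s_1$ and $s_2$ together with whichever length-four strings satisfy $R$, with transitions computed by the usual Aho--Corasick back-edges. The two-string game therefore has at most on the order of $|G|^2$ essentially different positions, and by Theorem \ref{twostr} it suffices to verify that every diagonal position $(v, v)$ is a Seeker win.

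The character-inversion symmetry immediately cuts the work in half: complementing all characters yields an isomorphic restriction graph, so only the five explicitly listed sets require direct analysis. For each, I would first draw $G$, merging equivalent states where possible; then compute the Seeker-winning set in the two-string game by backward induction starting from positions in which one of the current vertices has no legal outgoing arc (that is, the Obscurer is about to close a forbidden substring); and finally check that every position $(v, v)$ lies in the Seeker-winning set. Where possible I would replace the brute-force table by a verbal description of an explicit Seeker strategy --- typically something like ``always extend whichever of the two strings currently has larger risk, steering it toward a completion of $s_1$ or $s_2$'' --- since a short verbal strategy is easier both to state and to check.

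The main obstacle is bookkeeping rather than mathematical depth: with five sets and on the order of twenty two-string positions per game, the chance of a transcription slip is real. I would manage this by exploiting the character-inversion symmetry, organizing each case around a single compact table of game values, and double-checking each derived strategy against the restriction graph. A convenient sanity check is that for every set in the list, Lemma \ref{flip} should fail to rule the Seeker out at all levels $l \ge 1$; verifying this in passing also helps localize errors in the restriction-graph construction.
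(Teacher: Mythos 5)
Your plan for the positive half of the statement --- reduce to the two-string game via Theorem \ref{twostr}, build the finite restriction graph, and settle each of the five games (plus inversions) by backward induction --- is exactly the paper's method; the paper simply says ``direct computation.'' For that half there is nothing to object to, beyond the small point that the automaton states are just the proper prefixes of $s_1$ and $s_2$ (the Aho--Corasick construction does not need extra length-four states).

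The genuine gap is that the lemma is used in Theorem \ref{two} as an exhaustive classification: it must also be shown that \emph{no other} $S$ with $|s_1| = 4$ is a Seeker win, and your proposal never addresses this. The difficulty is that $s_2$ ranges over infinitely many strings, so no finite computation over the listed sets can establish completeness. The paper's proof supplies the missing idea: it checks by direct computation that no Seeker-win sets other than the listed ones exist with $4 = |s_1| \le |s_2| \le 5$, and then disposes of all $|s_2| \ge 6$ by the expansion monotonicity --- any such $S$ expands some $S'$ with $|s_2'| \le 5$ obtained by shrinking $s_2$ to a length-$5$ substring, and an Obscurer win for $S'$ (the more restrictive set) transfers to $S$. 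Without this reduction to a finite search space, your argument proves only that the Seeker wins for the five sets, not that these are all of them; you would need to add both the $|s_2| = 5$ sweep and the shrinking argument to close the lemma as the paper uses it.
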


\begin{proof}

Direct computation shows that there are no other $S$ such that the Seeker wins when $|s_1| = 4$ and $|s_2| \le 5$. Since every $S$ such that $4 = |s_1| \le |s_2|$ expands some $S$ such that $4 = |s_1| \le |s_2| \le 5$, the claim follows.

\end{proof}

\begin{lemma}\label{000}

When $s_1 = 000$, the Seeker wins for the following $s_2$: $011$, $101$, $110$, $1011$, and $1101$.

\end{lemma}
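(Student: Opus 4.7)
The plan is to invoke Theorem \ref{twostr}: for each of the five pairs $S = \{000, s_2\}$, I must show that the Seeker wins the two-string game for $R = r(S)$ from every diagonal starting position $(u, u)$ in the finite restriction graph $G$ of $R$. Since $G$ has at most six vertices in each case (the distinct proper prefixes of $000$ and $s_2$), the two-string game is a finite two-player game on at most a few dozen essentially distinct positions, and the winner from each can be settled by backward induction.

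For each case, my plan is to first construct $G$ explicitly and mark those vertices of $G$ that are missing one of the two out-arcs; a string about to be extended from such a vertex is lost on the spot, because the extender simply applies the forbidden label. I would then carry out the standard fixed-point computation to tag each position Seeker-winning or Obscurer-winning, and verify that every diagonal position comes out Seeker-winning. For illustration take $s_2 = 011$: then $G$ has vertices $\varepsilon, 0, 00, 01$, with arcs $\varepsilon \xrightarrow{0} 0$, $\varepsilon \xrightarrow{1} \varepsilon$, $0 \xrightarrow{0} 00$, $0 \xrightarrow{1} 01$, $00 \xrightarrow{1} 01$, $01 \xrightarrow{0} 0$, and the vertices missing an out-arc are $00$ (missing $0$) and $01$ (missing $1$). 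The Seeker's strategy is to first maneuver both strings to vertex $0$, then respond to the Obscurer's inevitable move into $\{00, 01\}$ by pushing the other string into $\{00, 01\}$ as well; from the resulting ``both dangerous'' O-turn, any Obscurer move leaves one string with a forbidden label available, and the Seeker plays it and wins. Propagating backward, all four diagonal positions $(\varepsilon, \varepsilon)$, $(0, 0)$, $(00, 00)$, $(01, 01)$ are Seeker-winning.

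The remaining four cases $s_2 \in \{101, 110, 1011, 1101\}$ follow the same template, with restriction graphs of four, four, five, and five vertices respectively. I would summarize them in a compact table giving $G$'s vertices and arcs, the vertices with a missing out-arc, and the key O-turn that anchors the backward induction; the full enumeration of subcases is then a short mechanical check, verifiable by hand or by a brief computer search. The only obstacle is clerical: five finite games to treat with care, each requiring a handful of subcases, but no individual step is deep.
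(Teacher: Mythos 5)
Your proposal matches the paper's own argument: the paper proves this lemma (by reference to the proof of Lemma \ref{len4}) exactly by the direct computation you describe, namely reducing via Theorem \ref{twostr} to the two-string game on the finite restriction graph and checking the finitely many positions. Your worked example for $s_2 = 011$ is correct (though a couple of your stated vertex counts for the other graphs are off by one, e.g.\ $\{000,101\}$ needs five states $\varepsilon, 0, 00, 1, 10$, not four), and this does not affect the validity of the method.
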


\begin{proof}

Analogous to the proof of Lemma \ref{len4}.

\end{proof}

Next comes a brief detour whose purpose will become evident later.

\begin{lemma}\label{gcdd}

When $d \ge 2$ and $S = \{01^{kd}0 \,|\, k \ge 0\}$, the Obscurer wins from the empty-string starting position. 

\end{lemma}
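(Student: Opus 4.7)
The plan is to exhibit an explicit Obscurer strategy in the two-string game for $R = r(S)$ starting from two empty strings.

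First, I would identify a finite restriction graph for $R$. A string avoids every substring in $S$ iff, between any two consecutive zeroes, the number of intervening ones is positive and not a multiple of $d$. Hence $R$ admits a graph with $d + 1$ vertices: a vertex $*$ for strings containing no zero, and vertices $0, 1, \ldots, d - 1$, with vertex $j$ representing strings whose trailing run of ones (after the last zero) has length $\equiv j \pmod{d}$. Appending $1$ sends $* \to *$ and $j \to (j + 1) \bmod d$, while appending $0$ sends every vertex other than $0$ to $0$ and is invalid from $0$.

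Let $G$ consist of every pair $(s, t)$ of vertices with $s \neq t$, together with $(*, *)$. The initial position $(*, *)$ lies in $G$. The Obscurer's strategy at any $(s, t) \in G$ on her turn is: (i) if one of $s, t$ is $0$, play bit $1$ on it; (ii) otherwise, if one equals $*$, play bit $1$ on it; (iii) otherwise both lie in $\{1, \ldots, d - 1\}$ and differ, and she plays bit $1$ on the smaller-indexed one. In every case the Obscurer's move is valid (bit $1$ is valid from every vertex), and the string she does not touch lies at a vertex from which both bits are valid, so the Seeker cannot immediately create an invalid string by her reply.

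The heart of the proof is verifying that, no matter which bit the Seeker chooses, the resulting pair is again in $G$. The crux is case (iii): writing $j < k$ for the two indices, the Obscurer moves $j$ to $j + 1 \in \{2, \ldots, d - 1\}$, and the Seeker sends $k$ either to $0$ or to $(k + 1) \bmod d$. The new pair $(j + 1, 0)$ lies in $G$ since $j + 1 \neq 0$, and the pair $(j + 1, (k + 1) \bmod d)$ lies in $G$ since $j \neq k$ with both in $\{0, \ldots, d - 1\}$ forces $j + 1 \not\equiv k + 1 \pmod{d}$. Cases (i) and (ii) are similar and shorter, using that $*$ is distinct from every numeric vertex. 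The main obstacle is the bookkeeping around the wrap-around at vertex $d - 1$ (both of whose outgoing bits land at $0$), which is why the Obscurer must advance the smaller of the two indices in case (iii) rather than the larger. Since $G$ is preserved forever, the Obscurer plays indefinitely and wins.
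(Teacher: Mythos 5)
Your proof is correct and follows essentially the same route as the paper: both arguments have the Obscurer always write 1s and maintain, by finite casework on the residues modulo $d$ of the trailing runs of ones, an invariant guaranteeing that the Seeker can never complete a forbidden substring. The only difference is the invariant itself --- the paper keeps both residues nonzero at the Seeker's turn (with a tiebreak on which string the Seeker is to extend), whereas you keep the two residues distinct at the Obscurer's turn and always advance the smaller one --- and your version is, if anything, slightly cleaner to verify.
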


\begin{proof}

If $u_1$ does not contain any 0s, set $h_1 = \infty$. Otherwise, set $h_1$ equal to the remainder of the number of 1s since the last 0 in $u_1$ upon division by $d$. Define $h_2$ analogously for $u_2$.

We say that a position is \emph{safe} if it is the Seeker to move, both $h_1$ and $h_2$ differ from 0, and, additionally, if $h_i + 1 = h_{\sim i} \neq \infty$, then the Seeker is to extend $h_{\sim i}$. (Here, $\sim 1 = 2$ and $\sim 2 = 1$.)

The Obscurer's strategy is to always write down a 1 so as to make the position safe. We proceed to show that this is indeed possible by means of induction and casework.

The Obscurer's opening move results in a safe position with $h_1 = h_2 = \infty$.

Suppose that the Seeker is to move in a safe position with $h_1 \le h_2$. We write Player: $i \circ a$ for Player extending $u_i$ by $a$. Here follows a list of all possible situations together with viable replies for the Obscurer. Immediately forced timelines of the form

\smallskip

\indent Seeker: $i$ $\circ$ any leading to $h_i = 0$, Obscurer: $i \circ 1$

\smallskip

\noindent are omitted for brevity.

\begin{itemize}
\item $h_1 + 1 < h_2 < d - 1$, Seeker: $i \cdot 1$, Obscurer: $\sim i \cdot 1$
\item $h_1 + 1 = h_2 < d - 1$, Seeker: $2 \cdot 1$, Obscurer: $1 \cdot 1$
\item $h_1 = h_2 < d - 1$, Seeker: $i \cdot 1$, Obscurer: $\sim i \cdot 1$
\item $h_1 + 1 < h_2 = d - 1$, Seeker: $1 \cdot 1$, Obscurer: $1 \cdot 1$
\item $h_1 + 1 = h_2 = d - 1$
\item $h_1 = h_2 = d - 1$
\item $h_1 < h_2 = \infty$, Seeker: $i \cdot 1$ not leading to $h_1 = 0$, Obscurer: $2 \cdot 1$
\item $h_1 = h_2 = \infty$, Seeker: $i \cdot 1$, Obscurer: $i \cdot 1$.
\end{itemize}

All of those lead back to a safe position, and we are done.

\end{proof}

A straightforward corollary is

\begin{lemma}\label{01k0}

When $k \ge 2$ and $S = \{00, 01^k0\}$, the Obscurer wins from the empty-string starting position.

\end{lemma}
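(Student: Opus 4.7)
The plan is to derive Lemma \ref{01k0} as an immediate consequence of Lemma \ref{gcdd} by a monotonicity observation. I would set $d = k$ in Lemma \ref{gcdd} (which is legal since $k \ge 2$) to obtain that the Obscurer wins the two-string game, starting from two empty strings, for the restriction $R' = r(S')$ where $S' = \{01^{jk}0 : j \ge 0\} = \{00,\ 01^k0,\ 01^{2k}0,\ 01^{3k}0,\ \ldots\}$.

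Next I would invoke the direction-of-containment principle for forbidden-substring restrictions: if $S_1 \subseteq S_2$, then $r(S_2) \subseteq r(S_1)$, because avoiding a larger family of substrings is a stricter condition. Here $\{00, 01^k 0\} \subseteq S'$, so every string that satisfies $R'$ also satisfies $R = r(\{00, 01^k 0\})$.

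Finally I would observe that the two-string games for $R$ and $R'$ have exactly the same move structure (each player appends an arbitrary bit to one of the strings), so the Obscurer's winning strategy from Lemma \ref{gcdd}, played verbatim against any Seeker, maintains the invariant that both $u_1$ and $u_2$ satisfy $R'$, and hence also satisfy $R$. In particular, neither string ever acquires $00$ or $01^k0$ as a substring, so the Obscurer wins the two-string game for $R$ from empty strings, which is exactly the claim. There is essentially no obstacle here beyond correctly tracking the reversal of inclusions when passing from $S$ to $r(S)$; once this is noted, the reduction is immediate.
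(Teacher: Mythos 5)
Your proposal is correct and matches the paper's intent exactly: the paper derives Lemma \ref{01k0} as "a straightforward corollary" of Lemma \ref{gcdd}, and the intended reduction is precisely your specialization $d = k$ together with the monotonicity $S \subseteq S'$ implies $r(S') \subseteq r(S)$ (equivalently, the paper's "expanded by" principle, since $S'$ is expanded by $S$). Your handling of the reversed inclusion and the observation that the two-string games share the same move structure are both sound.
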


Let $R_\#$ be the string set $r(\{01^k0 \,|\, k = 0 \textrm{ or } k \ge 2\})$. Then \[ R_\# = \{1^a\underbrace{010 \ldots 10}_b1^c \,|\, a, b, c \ge 0, b = 0 \textrm{ or } b \textrm{ is odd}\}. \]

\begin{lemma}\label{rsharp}

When $S = \{00, s_2\}$ with $s_2$ in $R_\#$, the Seeker wins.

\end{lemma}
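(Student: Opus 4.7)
By Theorem~\ref{twostr}, it suffices to show that the Seeker wins the two-string game for $R = r(\{00, s_2\})$ from every diagonal starting position $(v, v)$. I would split on the canonical form $s_2 = 1^a \underbrace{010\cdots 10}_{b} 1^c$ coming from the displayed description of $R_\#$: Case A if $b = 0$ (so $s_2 = 1^n$), Case B1 if $b = 1$ and $c = 0$ (so $s_2 = 1^a 0$), and Case B2 in all other subcases.

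In Cases A and B1 the mirror strategy wins: the Seeker extends her string by exactly the character the Obscurer just used, so that both strings occupy identical states at every moment. The game then reduces to a single-string process against $R$. Any $0$ the Obscurer writes lands the common state in the trailing-$0$ danger vertex $S'_0$; if instead she only writes $1$s, her run eventually saturates at either $S_{n-1}$ (Case A) or the length-$a$ prefix $1^a$ of $s_2$ (Case B1), and both of these are themselves danger vertices. Either way, once both strings sit at a common danger vertex, the Obscurer can safely extend only one of the two strings, and the Seeker abandons the mirror to extend the other and produce $00$ or $s_2$, winning.

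In Case B2 the mirror strategy no longer suffices, since the Obscurer can write $1$s indefinitely while the common state stabilizes at a non-danger vertex (for example $s_2 = 010$ or $s_2 = 101$, where the state saturates at $1$). Here the Seeker plays a hybrid strategy: she writes $0$ on her string whenever doing so is not about to lock the game into an unproductive cycle, and otherwise writes $1$ in order to advance her string one step further along the $s_2$-prefix chain of the Aho--Corasick automaton of $\{00, s_2\}$. The condition $s_2 \in R_\#$ — which forbids $s_2$ from containing any $01^j 0$ with $j \neq 1$ — is the structural feature that makes the strategy work: it sharply restricts the back-transitions in that automaton and prevents the Obscurer from employing a periodic two-string scheme analogous to the one used in Lemma~\ref{gcdd} to dodge $00$ and $s_2$ simultaneously on both strings.

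The main obstacle is the detailed verification of the Case B2 strategy. I would handle it by induction on $|s_2|$, with the short cases ($|s_2| \le 5$) serving as the base — these follow from Lemma~\ref{len4}, Lemma~\ref{000}, and Theorem~\ref{one} combined with the substring expansion principle recorded just after the definition of \emph{expanded} — and for longer $s_2 \in R_\#$ showing that the first few Seeker moves either force an immediate win or reduce the two-string game to one for a strictly shorter element of $R_\#$, to which the inductive hypothesis applies.
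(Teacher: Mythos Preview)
Your Cases~A and~B1 (mirror until both strings sit at a common danger vertex, then deviate) are fine, but Case~B2 --- which covers the bulk of $R_\#$ --- is not a proof. The phrase ``writes $0$ on her string whenever doing so is not about to lock the game into an unproductive cycle'' does not specify a strategy, and the inductive step (``the first few Seeker moves \ldots\ reduce the two-string game to one for a strictly shorter element of $R_\#$'') is asserted with no indication of which shorter element or how the reduction is effected. The base case is also not established: for example $s_2 = 01011 \in R_\#$ lies in your Case~B2 with $|s_2| = 5$, yet $\{00, 01011\}$ is not expanded by any Seeker-win set appearing in Theorem~\ref{one}, Lemma~\ref{len4}, or Lemma~\ref{000} (each of those sets contains a string with neither $00$ nor $01011$ as a substring), so the lemmas you cite do not cover it.

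The paper's argument avoids all of this by first isolating two forcing observations you do not mention: (i) the Obscurer may only ever write $1$s, since to Obscurer $i \circ 0$ the Seeker replies $\sim i \circ 0$ and wins on the following turn; and (ii) whenever the Seeker plays $i \circ 0$, the Obscurer is forced to reply $i \circ 1$, as any other reply lets the Seeker complete $00$. Together these hand the Seeker essentially deterministic control of both strings. A single explicit recipe then works for every $s_2 = 1^a\,010\ldots 10\,1^c$: write $1$s until the string the Seeker is about to extend ends in at least $a$ ones; iterate the forced block [Seeker $1\circ 0$, Obscurer $1\circ 1$, Seeker $2\circ 0$, Obscurer $2\circ 1$] exactly $(b+1)/2$ times to lay down the alternating middle; then finish $u_1$ with $1$s and $u_2$ with $0$s. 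No induction and no case split beyond $b = 0$ versus $b \ge 1$ is needed.
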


\begin{proof}

Notice first that the Obscurer must only write down 1s. For, to Obscurer: $i \circ 0$, the Seeker replies $\sim i \circ 0$ and wins on the following turn.

Also, Seeker: $i \circ 0$ forces Obscurer: $i \circ 1$ -- for, if the Obscurer replies in any other way, the Seeker wins on the turn.

Take, then, $s_2 = 1^a\underbrace{010 \ldots 10}_b1^c$. The Seeker's strategy is as follows.

\begin{itemize}
\item If $b = 0$, the Seeker writes down 1s only until $s_2$ occurs.
\item From this point on, $b \ge 1$. The Seeker writes down 1s only until he is to extend some $u_i$, say $u_1$, that ends in at least $a$ 1s.
\item The Seeker loops over [Seeker: $1 \circ 0$, Obscurer: $1 \circ 1$, Seeker: $2 \circ 0$, Obscurer: $2 \circ 1$] $\frac{b + 1}{2}$ times.
\item The Seeker extends $u_1$ by 1s only and $u_2$ by 0s only until $s_2$ occurs in $u_1$.
\end{itemize}

\end{proof}

Lemma \ref{01k0} and Lemma \ref{rsharp} give us

\begin{lemma}\label{00}

When $S = \{00, s_2\}$, the Seeker wins iff $s_2 \in R_\#$.

\end{lemma}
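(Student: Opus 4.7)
The plan is to combine Lemma~\ref{rsharp} with the expansion observation to get both directions. The ``$\Leftarrow$'' direction is immediate: if $s_2 \in R_\#$, then Lemma~\ref{rsharp} already gives that the Seeker wins. So the real content is the ``$\Rightarrow$'' direction, which I would prove contrapositively: if $s_2 \notin R_\#$, then the Obscurer wins.

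To this end, I would read off from the explicit description of $R_\#$ given just before Lemma~\ref{rsharp} that $s_2 \notin R_\#$ exactly when $s_2$ contains a substring of the form $01^k0$ with $k = 0$ or $k \ge 2$. (This is essentially the definition of $R_\# = r(\{01^k0 \,|\, k = 0 \text{ or } k \ge 2\})$.) I then split into two cases according to which value of $k$ appears.

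In the first case, $s_2$ contains $00$, so $S' = \{00\}$ is expanded by $S$: the string $00$ is a substring of both $00$ and $s_2$. Since $00 \notin \{0, 1, 01, 10\}$, Theorem~\ref{one} says the Obscurer wins for $r(S')$, and the remark immediately following the definition of expansion transfers this Obscurer win to $r(S) \supseteq r(S')$. In the second case, $s_2$ contains $01^k0$ for some $k \ge 2$, so $S' = \{00, 01^k 0\}$ is expanded by $S$. Lemma~\ref{01k0} says the Obscurer wins for $r(S')$, and again the expansion remark finishes the argument.

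There is no real obstacle in this lemma: the substantive work already lives in Lemma~\ref{rsharp} (which provides an explicit Seeker strategy built around the alternating structure of elements of $R_\#$) and in Lemma~\ref{01k0}, which in turn rests on the parity-tracking Obscurer strategy of Lemma~\ref{gcdd}. The proof I would write here is purely bookkeeping: parse the complement of $R_\#$ into the two families $\{s : 00 \subseteq s\}$ and $\{s : 01^k0 \subseteq s \text{ for some } k \ge 2\}$, and in each case exhibit the ``smaller'' forbidden set against which the Obscurer has already been shown to win.
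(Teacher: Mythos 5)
Your proof is correct and follows essentially the same route as the paper, which simply derives this lemma from Lemma~\ref{rsharp} (sufficiency) and Lemma~\ref{01k0} (necessity) without further comment. Your write-up is in fact slightly more careful than the paper's one-line derivation, since you explicitly handle the case where $s_2$ fails to lie in $R_\#$ because it contains $00$ itself (via Theorem~\ref{one} and the expansion remark), a case the paper leaves implicit.
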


\begin{lemma}\label{001}

When $s_1 = 001$, the Seeker wins iff $s_2 \in R_\#$.

\end{lemma}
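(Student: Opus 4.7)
The plan is to mirror Lemma~\ref{00}: expansion arguments handle necessity, and an adaptation of Lemma~\ref{rsharp}'s two-string strategy handles sufficiency. For necessity, suppose $s_2 \notin R_\#$, so $s_2$ contains either $00$ or $01^k0$ for some $k \ge 2$. In the first case $\{00\}$ is expanded by $\{001, s_2\}$ (both $001$ and $s_2$ contain $00$), so Theorem~\ref{one} supplies a winning strategy for the Obscurer. In the second case $\{00, 01^k0\}$ is expanded by $\{001, s_2\}$ (since $001$ contains $00$ and $s_2$ contains $01^k0$), and Lemma~\ref{01k0} supplies one.

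For sufficiency, assume $s_2 \in R_\#$ and argue in the two-string game, which is enough by Theorem~\ref{twostr}. Call a string $u_i$ \emph{locked} if it already contains $00$; a legal locked string necessarily ends in a run of $0$s of length at least two, and any further extension must be by $0$, or else $001$ appears. The key claim is that the Seeker wins as soon as some string is locked: whenever the Obscurer tries to extend the non-locked string, the Seeker writes $1$ to the locked one, completing $001$; thus the Obscurer is pinned to extending the locked string with $0$s, while the Seeker writes $s_2$ out character by character in the other string and wins in at most $|s_2|$ further turns.

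Next, I would show that the Obscurer cannot write $0$ without losing. If her $0$ targets a string already ending in $0$, that string locks immediately. Otherwise the Seeker copies $0$ to the opposite string; this either locks it at once (when it ended in $0$) or leaves both strings ending in a single $0$, whence any next Obscurer move either directly locks some string (by writing another $0$) or writes $1$ to some $u_j$, after which the Seeker replies with $0$ to $u_{\sim j}$ (which ends in $0$) and locks it. In every branch a locked string appears, so by the previous paragraph the Seeker wins.

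The Obscurer thus being restricted to writing $1$s, the Seeker runs Lemma~\ref{rsharp}'s three-phase strategy verbatim: extend with $1$s until some $u_i$ ends in $1^a$; loop $(b+1)/2$ times over [Seeker $i \circ 0$, Obscurer $i \circ 1$, Seeker $\sim i \circ 0$, Obscurer $\sim i \circ 1$] to grow a $(01)^{(b+1)/2}$ tail; and finally append $1$s until $s_2 = 1^a(01)^{(b-1)/2}0\,1^c$ appears as a substring. The in-loop forcing that Seeker $i \circ 0$ compels Obscurer $i \circ 1$ is justified exactly as in Lemma~\ref{rsharp}: the previous paragraph rules out any Obscurer $0$-reply, and a $1$-reply to $u_{\sim i}$ would let the Seeker respond on $u_i$ with a character that either completes $001$ outright or locks $u_i$. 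The main obstacle is the case analysis establishing the lockout in the third paragraph; once that is settled, Lemma~\ref{rsharp}'s argument carries over essentially unchanged.
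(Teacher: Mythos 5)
Your proof is correct, but the sufficiency half takes a genuinely different route from the paper's. The paper treats Lemma~\ref{00} as a black box: the Seeker runs the winning strategy for $r(\{00, s_2\})$ until either $s_2$ occurs (done) or $00$ occurs in some $u_1$, and then a short forced sequence of at most two further turns converts that occurrence of $00$ into an occurrence of $001$. You instead rebuild the machinery of Lemma~\ref{rsharp} inside the $r(\{001, s_2\})$ game: your ``locked'' analysis re-establishes from scratch that the Obscurer may only write $1$s, and then you re-run the three-phase construction. Both routes work. Your key observation --- that once a string contains $00$ it can only ever be extended by $0$s, so the Obscurer is pinned to it while the Seeker spells $s_2$ freely into the other string --- is the analogue of the paper's two-turn conversion gadget, and it correctly absorbs the branches of Lemma~\ref{rsharp}'s argument in which the Seeker ``wins by completing $00$'' (not a win under $r(\{001,s_2\})$, but a lock, which your claim turns into a win). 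The cost is length and redundancy: having proved the locking claim, you could have skipped the re-derivation of the three-phase strategy entirely and cited Lemma~\ref{00} as the paper does --- force $00$ or $s_2$ to occur, and in the former case the locked string already hands the Seeker the game.
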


\begin{proof}

By Lemma \ref{00}, this condition is necessary. We proceed to show that it is also sufficient.

By Lemma \ref{00}, the Seeker can play so that either 00 or $s_2$ occurs following a move by the Seeker. If the latter, we are done. If the former, then without loss of generality 00 has occurred in $u_1$. The following scenario tree,

\begin{itemize}
\item Obscurer: $1 \circ 0$, Seeker: $2 \circ 0$,
      \begin{itemize}
      \item Obscurer: $1 \circ 0$, Seeker: $2 \circ 0$, Obscurer: any, Seeker: any $\circ$ 1
      \item Obscurer: 2 $\circ$ any, Seeker: $1 \circ 1$
      \end{itemize}
\item Obscurer: 2 $\circ$ any, Seeker: $1 \circ 1$
\end{itemize}

\noindent (timelines such that the Seeker wins on the Obscurer's move are omitted for brevity) shows that the Seeker wins in at most two more turns.

\end{proof}

\begin{lemma}\label{010}

When $s_1 = 010$, the Seeker wins for the following $s_2$: $001$, $011$, $100$, $110$, $111$, $0011$, $0111$, $1100$, $1110$, $00111$, and $11100$.

\end{lemma}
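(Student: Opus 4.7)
The plan mirrors the approach of Lemma \ref{len4} and Lemma \ref{000}. By Theorem \ref{twostr}, the task reduces to showing that, for each of the eleven listed $s_2$, the Seeker wins the two-string game for $R = r(\{010, s_2\})$ from every diagonal starting position $(u, u)$.

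A first useful observation is that the two length-five strings $00111$ and $11100$ subsume all the shorter listed $s_2$ as substrings: $00111$ contains $001$, $011$, $111$, $0011$, and $0111$, while $11100$ contains $100$, $110$, $1100$, and $1110$. Consequently, whenever $s_2$ is any shorter entry on the list, the set $\{010, s_2\}$ is expanded by $\{010, 00111\}$ or by $\{010, 11100\}$, and by the expansion argument at the opening of Section \ref{substr} a Seeker victory for the longer restriction forces a Seeker victory for the shorter one. So the whole lemma reduces to proving that the Seeker wins in the two cases $s_2 = 00111$ and $s_2 = 11100$.

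For each of these two cases I would construct the restriction graph $G$ for $R$ by the standard Aho--Corasick construction on the two forbidden patterns, producing a graph on roughly seven vertices; the two-string game on $G$ then has on the order of two dozen essentially distinct positions, each of which can be labeled Seeker-win or Obscurer-win by a single retrograde sweep from the terminal positions inward. The lemma then amounts to reading off that every diagonal position in each of the two graphs is a Seeker-win. The only obstacle is the mechanical execution of these two retrograde analyses, for which I see no essential shortcut; the computation is small enough to be carried out by hand, and is cleanest when laid out as one backward-induction table per graph, or simply verified by machine, in the same spirit as the direct computations invoked in Lemma \ref{len4} and Lemma \ref{000}.
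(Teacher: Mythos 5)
Your proposal is correct and takes essentially the paper's route: the paper's proof is simply ``analogous to Lemmas \ref{len4} and \ref{000}'', i.e.\ direct computation on the two-string game over the finite restriction graph, and your extra observation that every shorter listed $s_2$ occurs as a substring of $00111$ or of $11100$ --- so that the expansion principle reduces the verification to just those two cases --- is a valid and genuine economy. The only caveat is that Theorem \ref{two} uses this lemma as an exhaustive classification, so the paper's computation implicitly also certifies that no other $s_2$ gives a Seeker win; your argument addresses only the positive direction, which is all the statement literally asserts.
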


\begin{proof}

Analogous to the proofs of Lemma \ref{len4} and Lemma \ref{000}.

\end{proof}

\begin{lemma}\label{100}

When $s_1 = 100$, the Seeker wins iff $s_2 \in R_\#$.

\end{lemma}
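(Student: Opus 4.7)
The plan is to treat the two directions of the biconditional separately. For the forward direction, I would combine Lemma~\ref{00} with the expansion observation: since $00$ is a substring of $100$, the set $\{00, s_2\}$ is expanded by $\{100, s_2\}$, so $r(\{00, s_2\}) \subseteq r(\{100, s_2\})$. A Seeker win in the less restrictive game implies a Seeker win in the more restrictive one, so if the Seeker wins for $\{100, s_2\}$ then he also wins for $\{00, s_2\}$, and Lemma~\ref{00} then forces $s_2 \in R_\#$.

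For the backward direction, assume $s_2 \in R_\#$. I would have the Seeker play a mild adaptation of Lemma~\ref{rsharp}'s winning strategy for $\{00, s_2\}$, with the rule that any scheduled $\circ 0$ move by the Seeker on a string currently ending in $0$ is replaced by $\circ 1$. Under this adapted strategy the Seeker never himself introduces a $00$ into a string. Moreover, if during the play the Obscurer ever extends a string currently ending in $\ldots 10$ by $\circ 0$, she creates $100$ outright and the Seeker wins. Finally, the adapted Lemma~\ref{rsharp} strategy still forces either $s_2$ or $00$ eventually to appear in some $u_i$: if $s_2$ appears we are done, and if $00$ appears in some $u_1$ with $|u_1| \geq 3$, the character of $u_1$ just before the new $00$ must be $1$ (it cannot be $0$, for then an earlier $00$ would have existed), so $u_1$ already contains $100$ and the Seeker again wins.

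The main obstacle is the remaining edge case, namely when $00$ appears but $|u_1| = 2$, i.e., $u_1 = 00$. Under the adapted strategy this arises only when the Obscurer has extended $u_1$ by $\circ 0$ on each of her first two turns, during which the Seeker's own two responses were $\circ 1$'s on $u_2$; hence $u_2 = 11$ and the edge configuration is $(u_1, u_2) = (00, 11)$. From here I would verify, by a finite case analysis in the joint Aho--Corasick automaton for $\{100, s_2\}$, that the Seeker can close out the game in a bounded number of further moves: he extends $u_2$ by $\circ 0$ to bring it into the $100$-forcing state $10$, after which any Obscurer response either creates $100$ outright or else leaves (or returns) $u_2$ to state $10$, letting the Seeker continue sculpting $u_2$ into an alternating $10101\ldots$ tail. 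Since every $s_2 \in R_\#$ is of the form $1^a (010\ldots 10)_b 1^c$, it must appear as a substring of the developing tail $11 \cdot 0 \cdot 1010\ldots 1$ on $u_2$ within boundedly many further moves, at which point the Seeker wins.
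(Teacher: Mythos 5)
Your necessity direction is exactly the paper's: $\{00,s_2\}$ is expanded by $\{100,s_2\}$, so a Seeker win for the latter yields one for the former, and Lemma~\ref{00} forces $s_2\in R_\#$. The sufficiency direction shares the paper's high-level idea (run the $\{00,s_2\}$ machinery and upgrade each completed $00$ to $100$), but the execution has two genuine gaps. First, your ``mild adaptation'' of Lemma~\ref{rsharp}'s strategy is asserted, not proved, and it actually undermines the mechanism that makes that strategy work: all of Lemma~\ref{rsharp} rests on the Seeker's threat to complete $00$ being credible at every moment (this is what forces the Obscurer to write only 1s and to answer Seeker $\circ\,0$ with $\circ\,1$). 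In the $\{100,s_2\}$ game that threat is credible only on a string ending in $10$; your blanket rule of suppressing the Seeker's $\circ\,0$ on any string ending in $0$ (as literally stated it would even suppress the winning move on a string ending in $10$) removes the threat precisely in the positions where the Obscurer wants to deviate, so the claim that the adapted strategy ``still forces either $s_2$ or $00$ eventually to appear'' does not follow. Second, the edge-case continuation fails: the developing tail $11\cdot 0\cdot 1010\ldots$ contains no occurrence of $111$ and none of $11011=1^2\,0\,1^2$, both of which belong to $R_\#$, so for such $s_2$ your bounded sculpting of $u_2$ completes neither $s_2$ nor $100$. The edge case cannot be closed out with an alternating tail; it needs the full Lemma~\ref{rsharp} construction again, from a non-symmetric position.

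The paper sidesteps both problems with a short opening phase that you are missing: the Seeker writes $1$ and then repeatedly $0$ on one string, threatening to complete $100$ there single-handedly, so the Obscurer must interrupt, whereupon the Seeker places a $1$ on the other string. After this both strings end in $1$ or $10$, and the \emph{unmodified} strategy of Lemma~\ref{00} can be run verbatim: whenever it completes a new $00$, the preceding character cannot be $0$ (that would give an earlier new $00$) and cannot be absent (the strings already have length at least two and end in $1$ or $10$), so it is a $1$ and the occurrence is in fact $100$. This ``seed a 1 into both strings first'' step is the missing ingredient; with it, both your ad hoc modification of the strategy and your edge-case analysis become unnecessary.
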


\begin{proof}

By Lemma \ref{00}, this condition is necessary. We proceed to show that it is also sufficient.

Initially, the Seeker ensures that both $u_1$ and $u_2$ contain a 1. Here follows one way to achieve this: without loss of generality, the Seeker is to extend $u_1$. He plays $1 \circ 1$, then proceeds to play $1 \circ 0$ and $1 \circ 0$ until he either wins by completing 100 in $u_1$ or the Obscurer interrupts him, giving him the opportunity to play $2 \circ 1$.

Following this, by Lemma \ref{00}, the Seeker plays so that either 00 or $s_2$ occurs. If the latter, we are done. If the former, then we are done as well because a 1 preceding 00 yields an occurrence of 100.

\end{proof}

All remaining $|s_1| \le 3$ cases are reduced to the ones described above by inverting all characters in $S$.

All cases such that $|s_1| \le 2$ are handled by Theorem \ref{one}, Lemma \ref{00}, and, for $s_2 = 11$, inverting all characters.

This completes the classification, and the following theorem summarizes it.

\begin{theorem}\label{two}

The complete list of all two-element sets of binary strings $S$ such that the Seeker wins the Ulam-Renyi game with lie restriction $R = r(S)$ is given by all sets expanded by a single-element Seeker-win set as listed in Theorem \ref{one}, the 28 sporadic pairs

\smallskip

\indent $s_1 = 000$, $s_2 = 101$, $1011$, $1101$,

\indent $s_1 = 010$, $s_2 = 111$, $0011$, $0111$, $1100$, $1110$, $00111$, $11100$,

\indent $s_1 = 101$, $s_2 = 0001$, $0011$, $1000$, $1100$, $00011$, $11000$,

\indent $s_1 = 111$, $s_2 = 0010$, $0100$,

\indent $\{0001, 1011\}$, $\{0001, 1101\}$, $\{0010, 0111\}$, $\{0010, 1110\}$, $\{0011, 0101\}$,

\indent $\{0100, 0111\}$, $\{0100, 1110\}$, $\{1000, 1011\}$, $\{1000, 1101\}$, $\{1010, 1100\}$,

\smallskip

\noindent and the six infinite families

\smallskip

\indent $s_1 \in \{00, 001, 100\}, s_2 \in R_\# \textrm{ and } s_1 \in \{11, 011, 110\}, s_2 \in R^{inv}_\#$.

\end{theorem}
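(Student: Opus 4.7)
The plan is to combine the preceding lemmas into a single exhaustive classification, organized by $|s_1|$ and exploiting the symmetry of character-inversion (which preserves the Seeker-win property since it merely swaps the labels of truths and lies). By Lemma \ref{len} I may restrict to $|s_1| \le 4$, and inversion then reduces the enumeration to $s_1 \in \{0, 00, 01, 000, 001, 010, 100\}$ together with the case $|s_1| = 4$.

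The case-by-case analysis runs as follows. For $|s_1| = 4$, Lemma \ref{len4} lists five Seeker-win pairs, which together with their character-inverses give the ten length-four sporadic pairs stated in Theorem \ref{two}. For $|s_1| = 3$, Lemmas \ref{001} and \ref{100} contribute two of the six infinite families ($s_2 \in R_\#$), to which inversion adds two more ($s_1 \in \{011, 110\}$, $s_2 \in R^{inv}_\#$); meanwhile Lemmas \ref{000} and \ref{010}, together with inversion to handle $s_1 \in \{101, 111\}$, yield the eighteen sporadic pairs with a length-three string. For $|s_1| = 2$, Lemma \ref{00} gives the fifth infinite family ($s_1 = 00$) and, by inversion, the sixth ($s_1 = 11$); the case $s_1 \in \{01, 10\}$ is absorbed into the expansion clause, since $\{01\}$ or $\{10\}$ is then expanded by $S$, making $r(S) \subseteq r(\{01\})$ or $r(S) \subseteq r(\{10\})$ and hence Seeker-win by Theorem \ref{one}. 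The case $|s_1| \le 1$ is handled identically via $\{0\}$ and $\{1\}$.

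Necessity follows from the same lemmas since each is (or can be read as) an iff: Lemmas \ref{00}, \ref{001}, \ref{100} state it explicitly, while Lemmas \ref{len4}, \ref{000}, \ref{010} prove exhaustiveness by direct computation bounded via the expansion argument (extending $s_2$ preserves Seeker-win only by reducing to a shorter Seeker-win pair). The main obstacle is pure bookkeeping: one must verify that the twenty-eight sporadic pairs and six infinite families, together with the expansion clause, exhaust the full set of Seeker-win pairs without gap or double-count. In particular I would check that every sporadic pair listed contains none of $0, 1, 01, 10$ as an element, so that it is genuinely not captured by the expansion clause, and that every additional Seeker-win pair produced by the lemmas but omitted from the sporadic list does contain one of $0, 1, 01, 10$ and is therefore correctly absorbed into the expansion clause. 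No new game-theoretic argument is required; the proof is a consolidation of the preceding lemmas.
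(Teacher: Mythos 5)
Your proposal is essentially the paper's own proof: Theorem \ref{two} is presented there as a pure consolidation of Lemmas \ref{len}, \ref{len4}, \ref{000}, \ref{001}, \ref{010}, \ref{100}, \ref{00} and Theorem \ref{one}, with inversion of characters disposing of the remaining $|s_1|\le 3$ cases, exactly as you describe. One bookkeeping correction: the Seeker-win pairs that appear in Lemmas \ref{000} and \ref{010} but are omitted from the sporadic list (e.g.\ $\{000,011\}$, $\{010,001\}$, $\{010,100\}$) contain none of $0$, $1$, $01$, $10$ and so are \emph{not} absorbed by the expansion clause; they are absorbed by the infinite families, since $000\in R^{inv}_\#$ and $010\in R_\#\cap R^{inv}_\#$, so your final exhaustiveness check must test membership in the families as well as in the expansion clause. (Also, in the paper's terminology it is $S$ that is \emph{expanded by} $\{01\}$, not the reverse, though your intended containment $r(S)\subseteq r(\{01\})$ is the correct one.)
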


A dual classification is given by the minimal (with respect to shrinking any element to a proper substring) Obscurer-win two-element $S$. It consists of 16 sporadic pairs,

\smallskip

\indent $s_1 = 000$, $s_2 = 010$, 111, 11011,

\indent $s_1 = 010$, $s_2 = 101$, 0110, 1001, 1111, 01110,

\indent $s_1 = 101$, $s_2 = 111$, 0000, 0110, 1001, 10001,

\indent $s_1 = 111$, $s_2 = 00100$,

\indent $\{0011, 1010\}$, and $\{0101, 1100\}$,

\smallskip

\noindent together with two infinite families,

\smallskip

\indent $\{00, 01^k0\}$ and $\{11, 10^k1\}$, $k \ge 2$.

\smallskip

By Theorem \ref{two}, the language formed by all strings $s_1 * s_2$ (where $*$ is a new character distinct from 0 and 1) such that the Seeker wins for $S = \{s_1, s_2\}$ is regular. As the following theorem shows, an analogous claim does not hold for three-element $S$.

\begin{theorem}\label{gcd}

When $S = \{00, 01^m0, 01^n0\}$, $m, n \ge 1$, the Seeker wins the Renyi-Ulam game with lie restriction $R = r(S)$ iff $m$ and $n$ are coprime.

\end{theorem}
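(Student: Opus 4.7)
The plan is to handle the two implications separately.

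For the implication that $\gcd(m, n) = d \geq 2$ implies the Obscurer wins: let $S_d = \{01^{kd}0 : k \geq 0\}$. I would verify that $S_d$ is expanded by $S = \{00, 01^m0, 01^n0\}$ --- indeed $00 \in S_d$ is a substring of $00 \in S$, and $01^d 0 \in S_d$ is a substring of both $01^m 0$ and $01^n 0$ (since $d$ divides $m$ and $n$ with $d \leq m, d \leq n$). Hence $r(S_d) \subseteq r(S)$, and the Obscurer's win for the more restrictive game $r(S_d)$ provided by Lemma \ref{gcdd} transfers to $r(S)$.

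For the implication that $\gcd(m, n) = 1$ implies the Seeker wins: by Theorem \ref{twostr} it suffices to give a Seeker strategy in the two-string game from any identical-start position $(v, v)$. Identify each string $u_i$ with its state $r_i$ in the natural restriction graph for $r(S)$, namely the number of trailing $1$'s after the last $0$ in $u_i$, with $r_i = \infty$ if $u_i$ contains no $0$. The pivotal observation is that a position $(r_1, r_2)$ at the Obscurer's turn with both $r_1, r_2 \in T := \{0, m, n\}$ is an immediate Seeker win: the Obscurer's only legal move is to append a $1$ to some string in $T$, after which the other string still has $r \in T$, and the Seeker plays $0$ on it to create a forbidden substring.

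The Seeker's task is to drive the game into such a doubly-$T$ position. I would use a ratchet strategy: whenever one of $u_1, u_2$ has $r \in T$, the Obscurer is forced to extend that string (else the Seeker wins on the other), which she can only do by a $+1$ move, and the Seeker then uses her resulting freedom on the other string either to build it up by $1$'s or to reset it to $r = 0$ via a $0$-move. The crucial use of $\gcd(m, n) = 1$ is that it destroys the Obscurer's modular refuge in the $\not\equiv 0 \pmod d$ residue classes that she exploits in Lemma \ref{gcdd}; without such a refuge every Obscurer trajectory eventually permits the Seeker to synchronize both strings into $T$.

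The main obstacle is termination: showing that the ratchet reaches a doubly-$T$ state in finitely many turns rather than cycling. I expect this to follow from a careful case analysis of the current pair of $r$-values, possibly organized as an induction on $\max(m, n)$ with base case $\min(m, n) = 1$ (direct, since when $m = 1$ the value $r = 1$ already lies in $T$ and a doubly-$T$ state is reached within a handful of moves) and an inductive step modeled on the Euclidean reduction $\gcd(m, n) = \gcd(m, n - m)$.
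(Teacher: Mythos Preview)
Your ``only if'' direction is correct and matches the paper's argument (the paper invokes Lemma~\ref{gcdd} directly, with the expansion reasoning you spell out left implicit).

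Your ``if'' direction has the right scaffolding---the reduction to the two-string game, the forcing observations, the target set $T=\{0,m,n\}$, and the Euclidean induction on $\max(m,n)$ are all exactly what the paper uses---but the inductive step is where the real content lies, and you have not supplied it. The difficulty you flag as ``termination'' is genuine: your ratchet description does not maintain a string in $T$ across turns (once the Obscurer is forced to append a $1$ to the string with $r_i\in T$, that string leaves $T$), so it is not clear how to steer both strings into $T$ simultaneously.

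The paper's missing ingredient is this: assuming the Seeker wins for $S'=\{00,\,01^m0,\,01^{n-m}0\}$, run that $S'$-strategy (after an initial buffer of $n{+}1$ all-$1$ moves so earlier characters are irrelevant), but halt \emph{one move before} the $S'$-win. At that instant both strings end in one of $0$, $01^m$, $01^{n-m}$. The first two already lie in $T$ for $S$; the third is converted to $01^n$ by playing $0$ on the other string (forcing the Obscurer there) and then appending $m$ further $1$'s, after which both strings are in $T$ and the Seeker wins. This ``stop-one-short-and-convert'' trick is the substantive step you would still need to discover; without it the induction does not close.
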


\begin{proof}

Suppose first that $(m, n) = d > 1$. By Lemma \ref{gcdd}, the Obscurer wins from the empty-string starting position.

Suppose, then, that $(m, n) = 1$. We proceed to show that the Seeker wins by induction on $\max\{m, n\}$.

When one of $m$ and $n$ equals 1, the claim follows by Theorem \ref{two}. Assume, from this point on, that $2 \le m < n$.

As in the proof of Lemma \ref{rsharp}, the Obscurer must only write down 1s, and Seeker: $i \circ 0$ forces Obscurer: $i \circ 1$.

The Seeker begins by writing down 1s only for the first $n + 1$ turns. This ensures that, from turn $n + 2$ on, the game proceeds as if from the empty-string starting position.

By induction hypothesis, the Seeker wins for $S' = \{00, 01^m0, 01^{n - m}0\}$. Let the Seeker follow a strategy that wins for $S'$ from the empty-string starting position. Then there must come a moment when the Seeker is to move, the Seeker has not won yet (for $S'$) and cannot win on this turn (for $S'$), and the Seeker can play so as to win on the next turn (for $S'$) regardless of the Obscurer's reply.

At this point, one turn short of the final stroke, the Seeker abandons the $S'$ strategy and focuses back on his original goal, winning for $S$.

In a position $u_1$, $u_2$ such that it is the Obscurer to move, the Seeker has not won yet (for $S'$), and the Seeker wins on the turn (for $S'$), both of $u_1$ and $u_2$ must necessarily end in one of $0$, $01^m$, or $01^{n - m}$.

This means that, without loss of generality, at present $u_1$ ends in one of $0$, $01^m$, or $01^{n - m}$ and the Seeker is to extend $u_2$.

The Seeker plays $2 \circ 0$, forcing Obscurer: $2 \circ 1$. If $u_1$ ends in $0$ or $01^m$, the Seeker wins on his next move.

Otherwise, $u_1$ ends in $01^{n - m}$. The Seeker then writes down 1s only on his next move as well as on the $m - 1$ turns that follow it. This brings the Seeker and the Obscurer to a point when it is the Obscurer to move, $u_1$ ends in $01^n$, and $u_2$ ends in $01^m$. The Seeker wins on the turn.

\end{proof}

Theorem \ref{gcd} hints that there may be a lot more to uncover on forbidden string sets $S$ with $|S| \ge 3$. We conclude with a generalization of Lemma \ref{len} that applies to all $|S| \ge 2$.

\begin{theorem}\label{lenn}

When $|S| = k \ge 2$ and the Seeker wins the Renyi-Ulam game with lie restriction $R = r(S)$, the length of the shortest string in $S$ does not exceed $4k - 4$.

\end{theorem}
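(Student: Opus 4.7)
The plan is strong induction on $k$, with base case $k = 2$ furnished by Lemma \ref{len}. For the inductive step I assume the statement for all $2 \le k' < k$ and argue it for $k \ge 3$, working toward a contradiction from the assumption that the Seeker wins for some $S$ of size $k$ whose shortest element has length $L \ge 4k - 3$.

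The first ingredient is a $k$-string generalization of Lemma \ref{comm}: any two distinct $s_i, s_j \in S$ have no common substring of length exceeding $4k - 8$. Indeed, given such a common substring $p$, the set $S' = (S \setminus \{s_i, s_j\}) \cup \{p\}$ is expanded by $S$, since every element of $S$ contains some element of $S'$ as a substring; hence $r(S') \subseteq r(S)$, so the Seeker also wins for $r(S')$. Since $|S'| \le k - 1$ and every element of $S \setminus \{s_i, s_j\}$ has length at least $L > 4k - 8$, the shortest element of $S'$ must be $p$ itself, and the inductive hypothesis (with Theorem \ref{one} covering the $|S'| = 1$ corner case) bounds $|p|$ by $4(k - 1) - 4 = 4k - 8$.

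The main step is a case enumeration modeled on the proof of Lemma \ref{len}, carried out at each of the three levels $l \in \{L - 1, L - 2, L - 3\}$, all of which are at least $4k - 6$. Lemma \ref{flip} furnishes at each such $l$ an $(l - 1)$-flip or $l$-flip of some $s_i \in S$ of risk at least $l$ via some $s_j \in S$. A check of the four sub-cases shows that every scenario with $i \ne j$ produces a common substring of $s_i$ and $s_j$ of length at least $l - 1 \ge 4k - 7$, contradicting the first step. The only surviving possibility is an $l$-flip of some $s_i$ of risk exactly $l$ via $s_i$ itself, which forces $s_i = a^l \overline{a} \ldots$ for some $a \in \{0, 1\}$. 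I expect this enumeration, though elementary, to be the most error-prone part of the argument.

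To close, no single string can be forced at two distinct levels, since its starting character and the position of its first change would each be pinned down twice in inconsistent ways. Hence the three levels produce three distinct elements of $S$, each of the form $a^{l} \overline{a} \ldots$ for the corresponding level $l$. By pigeonhole, two of them share the same starting character $a$, and their leading runs of $a$'s overlap in a common substring of length at least $L - 3 \ge 4k - 6$, exceeding the $4k - 8$ bound from the first step. This contradiction completes the inductive step.
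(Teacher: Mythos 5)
Your proposal is correct and follows essentially the same route as the paper's proof: the inductive bound on common substrings via the ``expanded by'' reduction, the flip-lemma case analysis at three consecutive levels $l$ forcing three distinct strings of the form $a^l\overline{a}\ldots$, and the final pigeonhole on the starting character. The only differences are cosmetic (you anchor the three levels at $L-1, L-2, L-3$ rather than $f(k-1)+2, +3, +4$, and you use strong induction to cover the possibility $|S'| < k-1$), so no further comment is needed.
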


\begin{proof}

We proceed by induction on $k$. When $k = 2$, the claim holds by Lemma \ref{len}.

Let $f(k) = 4k - 4$, and suppose that all elements of $S$ are of length at least $f(k - 1) + 5$.

No two elements of $S$ share a common substring of length at least $f(k - 1) + 1$. For, otherwise, we could obtain $S'$ from $S$ by replacing them with this common substring and, since $S'$ is expanded by $S$, arrive at a contradiction by the induction hypothesis.

Apply, then, Lemma \ref{flip} with $l \ge f(k - 1) + 2$.

Suppose first that an $(l - 1)$-flip of $s_i$ is of risk $l$. Then it cannot be so via $s_i$ because of the definition of a flip, nor can it be so via $s_j$, $j \neq i$, because that would give us a long common substring: a contradiction.

Suppose, then, that an $l$-flip of $s_i$ is of risk at least $l$. It cannot be so via $s_j$, $j \neq i$, because that would give us a long common substring. Therefore, it must be so via $s_i$, which gives us $s_i = 0^l1\ldots$ or $s_i = 1^l0\ldots$.

Apply the previous argument with $l$ set successively to $f(k - 1) + 2$, $f(k - 1) + 3$, and $f(k - 1) + 4$ to obtain three distinct elements of $S$ starting on exactly $f(k - 1) + 2$, $f(k - 1) + 3$, and $f(k - 1) + 4$ identical characters. Two of those elements share a common prefix of length at least $f(k - 1) + 2$, and this gives us a long common substring: a contradiction.

Therefore, at least one string in $S$ is of length at most $f(k - 1) + 4 = f(k)$.

\end{proof}

\bibliographystyle{plain}
\bibliography{guessing-game} 

\end{document}